\providecommand{\U}[1]{\protect\rule{.1in}{.1in}}
\theoremstyle{plain}
\newtheorem{definition}{Definition}
\newtheorem{example}{Example}
\newtheorem{lemma}{Lemma}
\newtheorem{remark}{Remark}
\newtheorem{theorem}{Theorem}
\numberwithin{equation}{section}
\begin{document}
\title[A New Type Nonself Total Asymptotically Nonexpansive Mappings]{Fixed Point of A New Type Nonself Total Asymptotically Nonexpansive Mappings
in Banach Spaces}
\author{Birol GUNDUZ}
\curraddr{Department of Mathematics, Faculty of Science and Art, Erzincan University,
Erzincan, 24000, Turkey.}
\email{birolgndz@gmail.com}
\author{Hemen DUTTA}
\curraddr{Department of Mathematics, Gauhati University, Guwahati, India.}
\email{hemen\_dutta08@rediffmail.com}
\author{Adem KILICMAN}
\curraddr{Department of Mathematics and Institute for Mathematical Research,
University Putra Malaysia, Selangor, Malaysia.}
\email{akilic@upm.edu.my}
\subjclass[2000]{ 47H09, 47H10, 47J25.}
\keywords{Total Asymptotically Nonexpansive Mapping, Opial Condition, Condition
(A$^{^{\prime}}$), Strong and weak convergence, Common fixed point.}

\begin{abstract}
In this work, a new concept of nonself total asymptotically nonexpansive
mapping is introduced and an iterative process is considered for two nonself
totally asymptotically nonexpansive mappings. Weak and strong convergence
theorems for computing common fixed points of two nonself total asymptotically
nonexpansive mappings are established in the framework of Banach spaces.
Finally, we give an example which show that our theorems are applicable.

\end{abstract}
\maketitle

\section{Introduction}

Let $K$ be a nonempty subset of a real normed linear space $E$. Denote by
$F(T)$ the set of fixed points of $T$, that is, $F(T)=\{x\in K:Tx=x\}$. A
mapping $T:K\rightarrow K$ is called

\begin{itemize}
\item nonexpansive if $T$ satisfy $\left\Vert Tx-Ty\right\Vert \leq\left\Vert
x-y\right\Vert $,

\item asymptotically nonexpansive if there exists a sequence $\left\{
k_{n}\right\}  \in\lbrack1,\infty)$ satisfying $\lim_{n\rightarrow\infty}%
k_{n}=1$ as $n\rightarrow\infty$ such that
\begin{equation}
\left\Vert T^{n}x-T^{n}y\right\Vert \leq k_{n}\left\Vert x-y\right\Vert
,\text{ }n\geq1,\label{d1}%
\end{equation}

\item uniformly $L$-Lipschitzian if there \ exists constant $L\geq0$ such that
$\left\Vert T^{n}x-T^{n}y\right\Vert \leq L\left\Vert x-y\right\Vert $

\item asymptotically quasi-nonexpansive if $F\left(  T\right)  \neq\emptyset$
and there exists a sequence $\{k_{n}\}\subset\lbrack1,\infty)$ with
$\lim_{n\rightarrow\infty}k_{n}=1$ such that
\[
\Vert T^{n}x-p\Vert\leq k_{n}\Vert x-p\Vert
\]
for all $x,y\in K$ and $p\in F\left(  T\right)  $.
\end{itemize}

Note that an asymptotically nonexpansive mapping with a nonempty fixed point
set is an asymptotically quasi-nonexpansive, but the converse of this
statement is not true. As a generalization of nonexpansive mappings, Goebel
and Kirk \cite{goebel} introduced asymptotically nonexpansive mappings. They
also showed that an asymptotically nonexpansive mapping defined on a nonempty
closed bounded subset of a real uniformly convex Banach space has a fixed point.

Iterative techniques for asymptotically nonexpansive self-mapping in Banach
spaces including modified Mann and modified Ishikawa iterations processes have
been studied widely by varied authors. However, these iteration processes may
fail to be well defined when the domain of $T$ is a proper subset of $E$.

If there exists a continuous mapping $P:E\rightarrow K$ such that $Px=x$, for
all $x\in K$, then a subset $K$ of $E$ is said to be a retract of $E$. As an
example of a retract of any uniformly convex Banach space $E$, we can give a
closed convex subset of $E$. The mapping $P$ is called a retraction if it
satisfy $P^{2}=P$. It follows that, if a mapping $P$ is a retraction, then
$Py=y$ for all $y$ in the range of $P$. Intercalarily, if $K$ is closed convex
and $P$ satisfy $P(Px+t(x-Px))=Px$ for all $x\in E$ and for all $t\geq0$, then
$P$ is said to be sunny \cite{bruck}.

In 2003, Chidume et al. \textit{\cite{COZ}} defined a new concept of
asymptotically nonexpansive self-mapping, which is defined as follows:

\begin{definition}
\textit{\cite{COZ} }Let $K$ be a nonempty subset of a real normed space $E$
and $P:E\rightarrow K$ be a nonexpansive retraction of $E$ onto $K$. A nonself
mapping $T:K\rightarrow E$\textit{\ is called asymptotically nonexpansive if
there exists a sequence }$\{k_{n}\}\subset\lbrack1,\infty) $\textit{\ with
}$\lim_{n\rightarrow\infty}k_{n}=1$\textit{\ such that }%
\begin{equation}
\Vert T(PT)^{n-1}x-T(PT)^{n-1}y\Vert\leq k_{n}\Vert x-y\Vert\label{d2}%
\end{equation}
\textit{for all }$x,y\in K$\textit{\ and }$n\geq1$\textit{. }$T$\textit{\ is
called uniformly }$L$\textit{-Lipschitzian if there exists a constant }%
$L>0$\textit{\ such that }%
\[
\Vert T(PT)^{n-1}x-T(PT)^{n-1}y\Vert\leq L\Vert x-y\Vert
\]
\textit{for all }$x,y\in K$\textit{\ and }$n\geq1$.
\end{definition}

Note that if $T$ is a self-mapping, then $P$ is an identity mapping.

To define a new and more general class of nonlinear mappings, Alber et al.
\cite{alber} introduced total asymptotically nonexpansive mappings.

\begin{definition}
\cite{alber} Let $K$ be a nonempty closed subset of a real normed linear space
$X$. A mapping $T:K\rightarrow K$ is called total asymptotically nonexpansive
if there exist nonnegative real sequences $\left\{  \mu_{n}\right\}  ,$
$\left\{  \lambda_{n}\right\}  $ with $\mu_{n},\lambda_{n}\rightarrow0$ as
$n\rightarrow\infty$\ and a strictly increasing continuous function $\phi:%
\mathbb{R}
\rightarrow%
\mathbb{R}
$ with $\phi\left(  0\right)  =0$ such that%
\begin{equation}
\left\Vert T^{n}x-T^{n}y\right\Vert \leq\left\Vert x-y\right\Vert +\mu_{n}%
\phi\left(  \left\Vert x-y\right\Vert \right)  +\lambda_{n},\label{d3}%
\end{equation}
for all $x,y\in K,$ $n\geq1.$
\end{definition}

\begin{remark}
If $\phi\left(  \lambda\right)  =\lambda$ and$\ \lambda_{n}=0$ for all
$n\geq1$, then total asymptotically nonexpansive mappings coincide with
asymptotically nonexpansive mappings. Thus an asymptotically nonexpansive
mappings is a total asymptotically nonexpansive mappings.
\end{remark}

The strongly convergence of iterative processes for a finite family of total
asymptotically nonexpansive mappings in Banach spaces have been studied by
Chidume and Ofoedu \cite{chidume1, chidume2} and they defined nonself total
asymptotically nonexpansive mappings as follow:

\begin{definition}
\cite{chidume1} Let $K$ be a nonempty subset of $E$. Let $P:E\rightarrow K$ be
the nonexpansive retraction of $E$ onto $K.$ A nonself mapping $T:K\rightarrow
E$ is called total asymptotically nonexpansive if there exist nonnegative real
sequences $\left\{  \mu_{n}\right\}  ,$ $\left\{  \lambda_{n}\right\}  $ with
$\mu_{n},\lambda_{n}\rightarrow0$ as $n\rightarrow\infty$\ and a strictly
increasing continuous function $\phi:%
\mathbb{R}
\rightarrow%
\mathbb{R}
$ with $\phi\left(  0\right)  =0$ such that%
\begin{equation}
\left\Vert T(PT)^{n-1}x-T(PT)^{n-1}y\right\Vert \leq\left\Vert x-y\right\Vert
+\mu_{n}\phi\left(  \left\Vert x-y\right\Vert \right)  +\lambda_{n},\text{for
all }x,y\in K,\text{ }n\geq1.\label{nonself total}%
\end{equation}

\end{definition}

\begin{remark}
If $\phi\left(  \lambda\right)  =\lambda$ and$\ \lambda_{n}=0$ for all
$n\geq1$, then nonself total asymptotically nonexpansive mappings coincide
with nonself asymptotically nonexpansive mappings.
\end{remark}

\begin{remark}
$(see$\cite{ZCK}$)$ In case $T:K\rightarrow E$ is asymptotically nonexpansive
and $P:E\rightarrow K$ is a nonexpansive retraction, then $PT:K\rightarrow K$
is asymptotically nonexpansive. In fact we have
\begin{align}
\left\Vert (PT)^{n}x-(PT)^{n}y\right\Vert  & =\left\Vert PT(PT)^{n-1}%
x-PT(PT)^{n-1}y\right\Vert \nonumber\\
& \leq\left\Vert T(PT)^{n-1}x-T(PT)^{n-1}y\right\Vert \label{eq1}\\
& \leq k_{n}\left\Vert x-y\right\Vert ,\nonumber
\end{align}
for all x,$y\in K$ and $n\in%
\mathbb{N}
.$
\end{remark}

But the converseof this is not true. Therefore, Zhou et al. \cite{ZCK}
introduced the following generalized definition recently.

\begin{definition}
\cite{ZCK} Let $K$ be a nonempty subset of real normed linear space $E.$ Let
$P:E\rightarrow K$ be the nonexpansive retraction of $E$ into $K.$ A nonself
mapping $T:K\rightarrow E$ is called asymptotically nonexpansive with respect
to $P$ if there exists sequences $\left\{  k_{n}\right\}  \in\lbrack1,\infty)$
with $k_{n}\rightarrow1$ as $n\rightarrow\infty$ such that%
\begin{equation}
\left\Vert (PT)^{n}x-(PT)^{n}y\right\Vert \leq k_{n}\left\Vert x-y\right\Vert
,\text{ \ }\forall x,y\in K,\text{ }n\in%
\mathbb{N}
.\label{d4}%
\end{equation}
$T$ is said to be uniformly $L$-Lipschitzian with respect to $P$ if there
exists a constant $L\geq0$ such that
\begin{equation}
\left\Vert (PT)^{n}x-(PT)^{n}y\right\Vert \leq L\left\Vert x-y\right\Vert
,\text{ \ }\forall x,y\in K,\text{ }n\in%
\mathbb{N}
.\label{d5*}%
\end{equation}

\end{definition}

Incorporating the above definitions, we introduce the following more general
definition of nonself total asymptotically nonexpansive mapings.

\begin{definition}
Let $K$ be a nonempty subset of $E$. Let $P:E\rightarrow K$ be the
nonexpansive retraction of $E$ onto $K.$ A nonself mapping $T:K\rightarrow E$
is called total asymptotically nonexpansive if there exist nonnegative real
sequences $\left\{  \mu_{n}\right\}  ,$ $\left\{  \lambda_{n}\right\}  $ with
$\mu_{n},\lambda_{n}\rightarrow0$ as $n\rightarrow\infty$\ and a strictly
increasing continuous function $\phi:%
\mathbb{R}
\rightarrow%
\mathbb{R}
$ with $\phi\left(  0\right)  =0$ such that%
\begin{equation}
\left\Vert (PT)^{n}x-(PT)^{n}y\right\Vert \leq\left\Vert x-y\right\Vert
+\mu_{n}\phi\left(  \left\Vert x-y\right\Vert \right)  +\lambda_{n},\text{for
all }x,y\in K,\text{ }n\geq1.\label{d6}%
\end{equation}

\end{definition}

\begin{remark}
We note that if $T:K\rightarrow E$ is a total asymptotically nonexpansive
mapping defined in $(\ref{nonself total})$ and $P:E\rightarrow K$ is a
nonexpansive retraction, then $PT:K\rightarrow K$ is total asymptotically
nonexpansive defined $(\ref{d3})$. Actually using $(\ref{d6})$, we have%
\begin{align*}
\left\Vert (PT)^{n}x-(PT)^{n}y\right\Vert  & \leq\left\Vert T(PT)^{n-1}%
x-T(PT)^{n-1}y\right\Vert \\
& \leq\left\Vert x-y\right\Vert +\mu_{n}\phi\left(  \left\Vert x-y\right\Vert
\right)  +\lambda_{n},\text{for all }x,y\in K,\text{ }n\geq1.
\end{align*}
Conversely, it may not be true.
\end{remark}

\begin{remark}
If $\phi\left(  \lambda\right)  =\lambda$ and$\ \lambda_{n}=0$ for all
$n\geq1$, then $(\ref{d6})$ reduces to $(\ref{d3})$.
\end{remark}

In this paper, we propose an iteration process for calculating common fixed
points of two nonself total asymptotically nonexpansive mappings; and give
convergence criteria for this iteration process for the mappings in Banach
spaces. Some convergence criteria of the iteration process under some
\ restrictions is also established in uniformly convex Banach spaces. We use
the following iteration:

Let $K$ be a nonempty closed convex subset of a real normed linear space $E$
with retraction $P$. Let $T_{1},T_{2}:K\rightarrow E$ be two nonself
asymptotically nonexpansive mappings with respect to $P$.%
\begin{equation}%
\begin{cases}
x_{1}\in K\\
x_{n+1}=(1-\alpha_{n})\left(  PT_{1}\right)  ^{n}y_{n}+\alpha_{n}\left(
PT_{2}\right)  ^{n}y_{n},\\
y_{n}=(1-\beta_{n})x_{n}+\beta_{n}\left(  PT_{1}\right)  ^{n}x_{n},\;\,\,n\in%
\mathbb{N}%
\end{cases}
\label{B}%
\end{equation}
where $\left\{  \alpha_{n}\right\}  $ and $\left\{  \beta_{n}\right\}  $ are
sequences in\textit{\ }$[0,1].$

\section{Preliminaries}

Now we recall some results and concepts.

Let $E$ be a Banach space with its dimension equal or greater than $2$. The
function $\delta_{E}(\varepsilon):(0,2]\rightarrow\lbrack0,1]$ defined by%
\[
\delta_{E}(\varepsilon)=\inf\left\{  1-\left\Vert \frac{1}{2}(x+y)\right\Vert
:\left\Vert x\right\Vert =1,\text{ }\left\Vert y\right\Vert =1,\text{
}\varepsilon=\left\Vert x-y\right\Vert \right\}
\]
is called modulus of $E$. If $\delta_{E}(\varepsilon)>0$ for all
$\varepsilon\in(0,2]$ then $E$ is called uniformly convex.

Let $E$ be a Banach space and $S(E)=\left\{  x\in E:\left\Vert x\right\Vert
=1\right\}  .$ If the limit $\lim_{t\rightarrow0}\frac{\left\Vert
x+ty\right\Vert -\left\Vert x\right\Vert }{t}$ exists for all $x,y\in S(E)$,
then $E$ said to be smooth.

Let $E$ be a Banach space. $E$ is said to have the Opial property if, whenever
$\{x_{n}\}$ is a sequence in $E$ converging weakly to some $x_{0}\in E$ and
$x\neq x_{0}$, it follows that%
\[
\limsup_{n\rightarrow\infty}\left\Vert x_{n}-x\right\Vert <\limsup
_{n\rightarrow\infty}\left\Vert x_{n}-y\right\Vert .
\]

Let $K$ be a nonempty subset of a Banach space $E$. For $x\in K$, the inward
set of $x$ is defined by $\{x+\lambda(u-x):u\in K,\lambda\geq1\}$ and

it is indicated by $I_{K}(x)$. Let $cl[I_{K}(x)]$ denotes the closure of the
inward set. A mapping $T:K\rightarrow E$ is called weakly inward if $Tx\in
cl[I_{K}(x)]$ for all $x\in K$. We can give self-mappings as example of weakly
inward mappings.

We denote domain and range of a mapping $T$ in $E$ by $D(T)$ and $R(T)$,
respectively. $T$ is said to be demiclosed at $p$ if for any given sequence
$\{x_{n}\}$ in $D(T)$, the conditions $x_{n}\rightharpoonup x_{0}$ and
$Tx_{n}\rightarrow p$ imply $Tx_{0}=p$, where $x_{n}\rightharpoonup x_{0}$
means that $\{x_{n}\}$ converges weakly to $x_{0}.$

Let $T:K\rightarrow K$~be a mapping. $T:K\rightarrow K$ is called

(i) completely continuous if for every bounded sequence $\{x_{n}\}$, there
exists a subsequence $\{x_{n_{j}}\}$ of $\{x_{n}\}$ such that $\{Tx_{n_{j}}\}
$ converges to some element in $R(T)$,

(ii) demi-compact if any sequence $\{x_{n}\}$ in $K$ satisfying $x_{n}%
-Tx_{n}\rightarrow0$ as $n\rightarrow\infty$ has a convergent subsequence.

(iii) satisfy condition $(A)$ \cite{ST} if $F\left(  T\right)  \neq\emptyset$
and there is a nondecreasing function $f:\left[  0,\infty\right)
\rightarrow\left[  0,\infty\right)  $ with $f\left(  0\right)  =0$, $f\left(
t\right)  >0$ for all $t\in\left(  0,\infty\right)  $ such that $\left\Vert
x-Tx\right\Vert \geq f\left(  d\left(  x,F\left(  T\right)  \right)  \right)
$ for all $x\in K$, where $d\left(  x,F(T)\right)  =\inf\left\{  \left\Vert
x-p\right\Vert :p\in F(T)\right\}  $.

We use modified version of condition (A) defined by Khan and
Fukharuddin\ \cite{KU} for two mappings as follows:

Two mappings $T_{1},$ $T_{2}:K\rightarrow K$ are said to satisfy condition
$(A^{\prime})$ \cite{KU} if there is a nondecreasing function $f:\left[
0,\infty\right)  \rightarrow\left[  0,\infty\right)  $ with $f\left(
0\right)  =0$, $f\left(  t\right)  >0$ for all $t\in\left(  0,\infty\right)  $
such that%
\[
\frac{1}{2}\left(  \left\Vert x-PT_{1}x\right\Vert +\left\Vert x-PT_{2}%
x\right\Vert \right)  \geq f\left(  d\left(  x,F\right)  \right)
\]
for all $x\in K$.

It is point out that condition $(A^{\prime})$\ reduces to condition $(A)$ when
$T_{1}$ equal to $T_{2}$. Also condition $(A)$ is weaker than demicompactness
or semicompactness of $T$, see \cite{ST}.

For proving main theorems of our paper, we need folowing lemmas.

\begin{lemma}
\label{exist}\cite{tan} If $\{a_{n}\},\{b_{n}\}$ and $\{c_{n}\}$ are\ three
sequences of nonnegative real numbers such that%
\[
a_{n+1}\leq\left(  1+b_{n}\right)  a_{n}+c_{n}\mathrm{,\;\ }n\in%
\mathbb{N}%
\]
and the sum $\sum_{n=1}^{\infty}b_{n}$ and $\sum_{n=1}^{\infty}c_{n}$ are
finite, then $\lim\limits_{n\rightarrow\infty}a_{n}$ exist.
\end{lemma}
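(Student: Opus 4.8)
The plan is to prove that $\limsup_{n\to\infty} a_n \le \liminf_{n\to\infty} a_n$, which together with the trivial reverse inequality forces $\lim_{n\to\infty} a_n$ to exist. The hypothesis $\sum_{n=1}^{\infty} b_n < \infty$ is what makes everything work: since $1+b_j \le \exp(b_j)$, any finite product of the factors $(1+b_j)$ with indices $j \ge N$ is bounded by $W_N := \exp\big(\sum_{j=N}^{\infty} b_j\big) < \infty$, and moreover $W_N \to 1$ as $N \to \infty$ because the tail $\sum_{j=N}^{\infty} b_j \to 0$. These two facts (a uniform bound on the products, and the convergence of the tail products to $1$) are the engine of the argument.

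First I would establish, by a straightforward induction on $m$, the telescoped inequality: for all $m > n \ge 1$,
\[
a_m \le \Big(\prod_{j=n}^{m-1}(1+b_j)\Big) a_n + \sum_{k=n}^{m-1}\Big(\prod_{j=k+1}^{m-1}(1+b_j)\Big) c_k .
\]
The base case $m=n+1$ is precisely the hypothesis, and the inductive step follows by multiplying the bound for $a_m$ by $(1+b_m)$ and adding $c_m$, invoking $a_{m+1} \le (1+b_m)a_m + c_m$. Specializing to $n=1$ and bounding each product by $W_1$ gives $a_m \le W_1 a_1 + W_1 \sum_{k=1}^{\infty} c_k$, so $\{a_n\}$ is bounded; consequently $\limsup_n a_n$ and $\liminf_n a_n$ are finite real numbers, which is needed before any $\limsup$/$\liminf$ manipulation is legitimate.

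Next, fix $N$ and take any $n \ge N$. Bounding every product in the telescoped inequality by $W_N$ yields, for all $m > n$,
\[
a_m \le W_N\, a_n + W_N \sum_{k=n}^{\infty} c_k .
\]
Holding $n$ fixed and letting $m \to \infty$ gives $\limsup_{m} a_m \le W_N a_n + W_N \sum_{k=n}^{\infty} c_k$; taking $\liminf_{n\to\infty}$ of the right-hand side, and using that $\sum_k c_k < \infty$ forces $\sum_{k=n}^{\infty} c_k \to 0$, produces $\limsup_{m} a_m \le W_N \liminf_{n} a_n$. Letting $N \to \infty$ and using $W_N \to 1$ then gives $\limsup_n a_n \le \liminf_n a_n$, as desired. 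The only delicate point—and the step where care is essential—is the bookkeeping of the three nested limits: one must let $m \to \infty$ with $n$ and $N$ fixed, then vary $n$, then vary $N$, in that order, so that the uniform bound $W_N$ can be applied before it is allowed to relax to $1$.
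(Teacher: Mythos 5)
Your proof is correct as written: the telescoped inequality is established by a sound induction, the bound $1+t\le\exp(t)$ legitimately gives the uniform tail bound $W_N=\exp\bigl(\sum_{j=N}^{\infty}b_j\bigr)$ with $W_N\to 1$, boundedness of $\{a_n\}$ is secured before any $\limsup$/$\liminf$ manipulation, and the three limits are taken in the correct order, yielding $\limsup_n a_n\le W_N\liminf_n a_n$ for every $N$ and hence $\limsup_n a_n\le\liminf_n a_n$. Be aware, though, that the paper contains no proof of this lemma at all: it is quoted from Tan and Xu \cite{tan}, so there is nothing in the source to match your argument against line by line. The closest echo of your technique inside the paper is the proof of Theorem \ref{Teorem}, which uses the same device $1+t\le\exp(t)$ and the tail factor $\exp\bigl(\sum_{i=n}^{\infty}b_i\bigr)$ to get the Cauchy property of $\{x_n\}$. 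For comparison, the classical argument behind the citation is shorter and avoids nested limits entirely: set $d_n=\prod_{j=1}^{n-1}(1+b_j)$, which converges to a finite $d\ge 1$ because $\sum_j b_j<\infty$; then $\tilde{a}_n:=a_n/d_n$ satisfies $\tilde{a}_{n+1}\le\tilde{a}_n+c_n$ (since $d_{n+1}\ge d_n\ge 1$), so $\tilde{a}_n+\sum_{k=n}^{\infty}c_k$ is nonincreasing and bounded below, hence convergent; as the tail $\sum_{k=n}^{\infty}c_k$ vanishes, $\tilde{a}_n$ converges, and so does $a_n=d_n\tilde{a}_n$. That monotonicity route buys brevity and transparency, reducing everything to a single monotone bounded sequence; your $\limsup$/$\liminf$ squeeze proves the same thing with more explicit quantitative control (the factor $W_N$ measures exactly how much the sequence can overshoot), at the price of the triple-limit bookkeeping you correctly identify as the delicate step.
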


\begin{lemma}
\label{fark}\cite{schu}\textit{Suppose that }$E$\textit{\ is a uniformly
convex Banach space and }$0<p\leq t_{n}\leq q<1$\textit{\ for all }$n~\in%
\mathbb{N}
$\textit{. Also, suppose that }$\left\{  x_{n}\right\}  $\textit{\ and
}$\left\{  y_{n}\right\}  $\textit{\ are sequences of }$E$ \textit{such that}%
\[
\limsup_{n\rightarrow\infty}\left\Vert x_{n}\right\Vert \leq r,\text{ }%
\limsup_{n\rightarrow\infty}\left\Vert y_{n}\right\Vert \leq r\text{ and }%
\lim_{n\rightarrow\infty}\left\Vert \left(  1-t_{n}\right)  x_{n}+t_{n}%
y_{n}\right\Vert =r
\]
\textit{hold for some }$r\geq0$\textit{. Then }$\lim_{n\rightarrow\infty
}\left\Vert x_{n}-y_{n}\right\Vert =0$\textit{.}
\end{lemma}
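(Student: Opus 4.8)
The plan is to argue by contradiction, reducing everything to the quantitative content of uniform convexity carried by the modulus $\delta_{E}$ introduced above. First I would dispose of the degenerate case $r=0$: here $\limsup\|x_{n}\|\le 0$ and $\limsup\|y_{n}\|\le 0$ force $\|x_{n}\|\to 0$ and $\|y_{n}\|\to 0$, so that $\|x_{n}-y_{n}\|\le\|x_{n}\|+\|y_{n}\|\to 0$ and there is nothing more to prove. From now on I would assume $r>0$.

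The key auxiliary estimate I would establish is a ball-version of uniform convexity: if $\|u\|\le 1$, $\|v\|\le 1$ and $\|u-v\|\ge\varepsilon$, then for every $t\in[0,1]$
\[
\|(1-t)u+tv\|\le 1-2\min\{t,1-t\}\,\delta_{E}(\varepsilon).
\]
This follows from the definition of $\delta_{E}$ once one passes from the unit sphere to the closed unit ball, combined with the decomposition $(1-t)u+tv=(1-2t)u+2t\cdot\tfrac12(u+v)$ valid for $t\le\tfrac12$ (and the symmetric one for $t\ge\tfrac12$), which isolates the averaged term $\tfrac12(u+v)$ to which the modulus applies.

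With this inequality in hand, suppose toward a contradiction that $\|x_{n}-y_{n}\|\not\to 0$. Then there exist $\varepsilon_{0}>0$ and a subsequence along which $\|x_{n}-y_{n}\|\ge\varepsilon_{0}$. Fixing a small $\eta\in(0,r]$, for all large $n$ we have $\|x_{n}\|\le r+\eta$ and $\|y_{n}\|\le r+\eta$; setting $u_{n}=x_{n}/(r+\eta)$ and $v_{n}=y_{n}/(r+\eta)$ gives $\|u_{n}\|,\|v_{n}\|\le 1$ with $\|u_{n}-v_{n}\|\ge\varepsilon_{0}/(r+\eta)\ge\varepsilon_{0}/(2r)=:\varepsilon_{1}$. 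Since $t_{n}\in[p,q]$ we have $\min\{t_{n},1-t_{n}\}\ge m:=\min\{p,1-q\}>0$, so applying the estimate and scaling back yields
\[
\|(1-t_{n})x_{n}+t_{n}y_{n}\|\le (r+\eta)\bigl(1-2m\,\delta_{E}(\varepsilon_{1})\bigr).
\]
Passing to the limit along the subsequence and using the hypothesis $\lim_{n\to\infty}\|(1-t_{n})x_{n}+t_{n}y_{n}\|=r$ gives $r\le (r+\eta)\bigl(1-2m\,\delta_{E}(\varepsilon_{1})\bigr)$. As $\varepsilon_{1}>0$, uniform convexity guarantees $\delta_{E}(\varepsilon_{1})>0$, so choosing $\eta$ small enough forces the right-hand side strictly below $r$, a contradiction. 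Hence $\|x_{n}-y_{n}\|\to 0$.

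The hard part will be the auxiliary ball estimate, since $\delta_{E}$ is defined only for unit vectors while the $x_{n},y_{n}$ are merely norm-bounded by roughly $r$; I must justify replacing the sphere by the ball and carefully retain the factor $2\min\{t,1-t\}$ so that the constant $m$ arising from $t_{n}\in[p,q]$ is genuinely available. Once that inequality and the normalization are secured, the contradiction argument is entirely routine.
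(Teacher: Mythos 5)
Since the paper never proves this lemma---it is imported verbatim from Schu \cite{schu}---there is no internal argument to compare against; judged on its own, your proposal is correct in outline and is essentially the standard proof, indeed the one in the literature. Your contradiction scaffolding is sound in every detail: disposing of $r=0$ separately, inflating the bound to $r+\eta$, normalizing by $r+\eta$, fixing $\varepsilon_{1}=\varepsilon_{0}/(2r)$ so that it is independent of $\eta$, extracting $m=\min\{p,1-q\}>0$ from both endpoint bounds on $t_{n}$, and sending $\eta\downarrow0$ at the end are all exactly right (note also that $\varepsilon_{0}\leq\Vert x_{n}-y_{n}\Vert\leq2(r+\eta)\leq4r$, so $\varepsilon_{1}\leq2$ and $\delta_{E}(\varepsilon_{1})$ is defined). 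The one load-bearing ingredient you state but defer is the ball form of uniform convexity, and you correctly identify it as the crux: the paper defines $\delta_{E}$ only over unit vectors with $\Vert x-y\Vert=\varepsilon$ exactly, whereas you need
\[
\Vert u\Vert\leq1,\ \Vert v\Vert\leq1,\ \Vert u-v\Vert\geq\varepsilon\ \Longrightarrow\ \left\Vert\tfrac{1}{2}\left(u+v\right)\right\Vert\leq1-\delta_{E}\left(\varepsilon\right),
\]
from which your decomposition $(1-t)u+tv=(1-2t)u+2t\cdot\tfrac{1}{2}(u+v)$ for $t\leq\tfrac{1}{2}$ (and its mirror image for $t\geq\tfrac{1}{2}$) does yield the coefficient $2\min\{t,1-t\}$ as claimed. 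This sphere-to-ball passage is a true classical fact: the modulus computed over the closed unit ball with the inequality constraints coincides with the sphere modulus (Figiel; see also Goebel and Kirk's monograph), and Schu's own proof invokes precisely such a ball inequality, $\Vert\lambda x+(1-\lambda)y\Vert\leq1-2\lambda(1-\lambda)\delta_{E}(\varepsilon)$, quoted from Zeidler---your estimate with the slightly weaker coefficient $2t(1-t)$, either of which suffices here since $\min\{t,1-t\}\geq t(1-t)$. One caution if you try to prove the ball estimate yourself rather than cite it: the naive repair of radially projecting $u,v$ onto the sphere perturbs $\Vert u-v\Vert$, so you would additionally need $\inf_{\varepsilon^{\prime}\in[\varepsilon,2]}\delta_{E}(\varepsilon^{\prime})>0$, i.e., some monotonicity or uniformity of the modulus, itself a nontrivial (though true) classical fact; citing the ball version is therefore the cleanest way to close your single remaining step, after which the proof is complete.
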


\begin{lemma}
\label{F}\cite{TA}Let $E$ be real smooth Banach space, let $K$ be nonempty
closed convex subset of $E$ with $P$ as a sunny nonexpansive retraction, and
let $T:K\rightarrow E$ be a mapping satisfying weakly inward condition. Then
$F(PT)=F(T).$
\end{lemma}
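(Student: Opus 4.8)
The plan is to prove the two inclusions $F(T)\subseteq F(PT)$ and $F(PT)\subseteq F(T)$ separately; the first is immediate and the second carries all the work. For $F(T)\subseteq F(PT)$: if $x\in F(T)$ then $x=Tx$ and $x\in K$, so since $P$ is a retraction onto $K$ we have $Px=x$, whence $PTx=Px=x$ and $x\in F(PT)$. For the reverse inclusion I would invoke the variational characterization of a sunny nonexpansive retraction in a smooth space. Because $E$ is smooth, its normalized duality map $J$ is single-valued, and a retraction $P$ onto $K$ is sunny and nonexpansive if and only if
\begin{equation}
\langle w-Pw,\,J(y-Pw)\rangle\leq 0\quad\text{for all }w\in E,\ y\in K. \tag{$\ast$}
\end{equation}
First I would record this standard characterization, together with the two properties of $J$ the argument needs: positive homogeneity, $J(\lambda v)=\lambda J(v)$ for $\lambda\geq 0$, and norm-to-weak${}^{\ast}$ continuity of $J$, which holds precisely because $E$ is smooth.

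Next, take $x\in F(PT)$, so that $PTx=x$ and $x\in K$. Applying $(\ast)$ with $w=Tx$, so that $Pw=PTx=x$, gives
\[
\langle Tx-x,\,J(y-x)\rangle\leq 0\quad\text{for all }y\in K.
\]
Now I would bring in the weakly inward hypothesis $Tx\in cl[I_{K}(x)]$: pick $z_{n}\in I_{K}(x)$ with $z_{n}\to Tx$ and write $z_{n}=x+\lambda_{n}(u_{n}-x)$ with $u_{n}\in K$ and $\lambda_{n}\geq 1$, so that $u_{n}-x=\lambda_{n}^{-1}(z_{n}-x)$. Substituting $y=u_{n}$ into the previous display and using positive homogeneity of $J$ yields $\langle Tx-x,\,J(z_{n}-x)\rangle\leq 0$ for every $n$. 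Letting $n\to\infty$, and using $z_{n}-x\to Tx-x$ together with the norm-to-weak${}^{\ast}$ continuity of $J$, I obtain $\langle Tx-x,\,J(Tx-x)\rangle=\Vert Tx-x\Vert^{2}\leq 0$, hence $Tx=x$ and $x\in F(T)$.

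I expect the genuine obstacle to be the closure in the weakly inward condition: one cannot substitute $Tx$ directly into $(\ast)$, since $Tx$ need only lie in the closure of $I_{K}(x)$ and not in $I_{K}(x)$ itself. Resolving this requires the approximation by the sequence $z_{n}$ together with the continuity of the duality map, which is exactly where the smoothness of $E$ enters; if one already knew $Tx\in I_{K}(x)$, the computation would collapse to a single line, and the real content is upgrading it to the closure.
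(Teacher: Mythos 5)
Your proof is correct. Note that the paper offers no proof of this lemma at all: it is quoted as a known result from Takahashi's book \cite{TA}, so there is no in-paper argument to compare against. Your argument is the standard one from that literature, and every step checks out: the inclusion $F(T)\subseteq F(PT)$ is immediate from $Px=x$ on $K$; the characterization $\langle w-Pw,\,J(y-Pw)\rangle\leq 0$ for all $w\in E$, $y\in K$ is indeed equivalent to $P$ being a sunny nonexpansive retraction when $E$ is smooth (so that $J$ is single-valued); substituting $w=Tx$, $y=u_{n}$ with the decomposition $z_{n}=x+\lambda_{n}(u_{n}-x)$, $\lambda_{n}\geq 1$, and using $J(\lambda v)=\lambda J(v)$ for $\lambda\geq 0$ gives $\langle Tx-x,\,J(z_{n}-x)\rangle\leq 0$; and since smoothness yields norm-to-weak${}^{\ast}$ continuity of $J$ (e.g.\ via $J=\frac{1}{2}\partial\Vert\cdot\Vert^{2}$ and single-valuedness), pairing against the fixed vector $Tx-x$ passes to the limit and forces $\Vert Tx-x\Vert^{2}\leq 0$. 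Your closing remark correctly locates the only genuine content — upgrading the inequality from $I_{K}(x)$ to its closure is exactly where smoothness is indispensable — and the degenerate case $z_{n}=x$ causes no harm since $J(0)=0$ makes the inequality trivial there.
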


\section{Main results}

In this section, we shall prove convergence of the iteration sceheme defined
(\ref{B}) for two nonself total asymptotically nonexpansive mappings. We
always assume $F=F(T_{1})\cap F(T_{2})=\left\{  x\in K:T_{1}x=T_{2}%
x=x\right\}  \neq\emptyset$. In order to prove our main results, we shall make
use of following lemmas.

\begin{lemma}
Let $E$ be a real Banach space, $K$\ be a nonempty subset of $E$ which as also
a nonexpansive retract with retraction $P$. Let $T_{1},T_{2}:K\rightarrow E$
be two nonself total asymptotically nonexpansive mappings with squences
$\{\mu_{n}\},\{\lambda_{n}\}$. Suppose that there exist $M,M^{\ast}>0,$ such
that $\phi(\kappa)\leq M^{\ast}\kappa$ for all $\kappa\geq M$. Then for any
$x,y\in K$ following inequalties hold;%
\begin{align}
\left\Vert \left(  PT_{1}\right)  ^{n}x-\left(  PT_{1}\right)  ^{n}%
y\right\Vert  & \leq(1+\mu_{n}M^{\ast})\left\Vert x-y\right\Vert +\mu_{n}%
\phi\left(  M\right)  +\lambda_{n},\label{1a}\\
\left\Vert \left(  PT_{2}\right)  ^{n}x-\left(  PT_{2}\right)  ^{n}%
y\right\Vert  & \leq(1+\mu_{n}M^{\ast})\left\Vert x-y\right\Vert +\mu_{n}%
\phi\left(  M\right)  +\lambda_{n},\text{ }n\geq1.\label{1b}%
\end{align}

\end{lemma}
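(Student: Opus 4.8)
The plan is to prove the two inequalities simultaneously, since they are identical in form and follow from the defining inequality (\ref{d6}) for $i=1,2$. The only nontrivial input beyond (\ref{d6}) is the hypothesis $\phi(\kappa)\leq M^{\ast}\kappa$ for all $\kappa\geq M$, together with the fact that $\phi$ is strictly increasing. So the whole argument reduces to controlling the term $\phi(\|x-y\|)$ appearing on the right-hand side of (\ref{d6}), and I would organize this as a split into two cases according to the size of $\|x-y\|$ relative to $M$.

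First I would fix $x,y\in K$ and $i\in\{1,2\}$ and start from the defining inequality $\|(PT_i)^n x-(PT_i)^n y\|\leq \|x-y\|+\mu_n\phi(\|x-y\|)+\lambda_n$. In the case $\|x-y\|\geq M$, the growth hypothesis gives $\phi(\|x-y\|)\leq M^{\ast}\|x-y\|$ directly, so substituting yields $\|(PT_i)^n x-(PT_i)^n y\|\leq(1+\mu_n M^{\ast})\|x-y\|+\lambda_n$, and since $\mu_n\phi(M)\geq 0$ I may freely add it to the right side to match the claimed bound. In the case $\|x-y\|<M$, I would instead use monotonicity of $\phi$ to get $\phi(\|x-y\|)\leq\phi(M)$, giving $\|(PT_i)^n x-(PT_i)^n y\|\leq\|x-y\|+\mu_n\phi(M)+\lambda_n$; then, because $\mu_n M^{\ast}\|x-y\|\geq 0$, I can insert that term to again reach the desired form.

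Combining the two cases shows that in either situation
\begin{equation*}
\left\Vert (PT_i)^{n}x-(PT_i)^{n}y\right\Vert \leq (1+\mu_{n}M^{\ast})\left\Vert x-y\right\Vert +\mu_{n}\phi(M)+\lambda_{n},
\end{equation*}
which is precisely (\ref{1a}) for $i=1$ and (\ref{1b}) for $i=2$. There is no real obstacle here: the argument is essentially bookkeeping, and the only point requiring any care is recognizing that the single estimate $\phi(\kappa)\leq M^{\ast}\kappa$ for $\kappa\geq M$ does not hold for small $\kappa$, which is exactly why the case split (and the additive constant $\mu_n\phi(M)$ absorbing the small-argument regime) is needed. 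I expect the splitting threshold $M$ and the monotonicity of $\phi$ to be the two ingredients that make the uniform bound work, and both are supplied by the hypotheses.
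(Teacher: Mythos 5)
Your proposal is correct and follows essentially the same route as the paper: both arguments split on whether $\left\Vert x-y\right\Vert$ exceeds $M$, using monotonicity of $\phi$ below the threshold and the linear growth bound $\phi(\kappa)\leq M^{\ast}\kappa$ above it, which amounts to the uniform estimate $\phi(\kappa)\leq\phi(M)+M^{\ast}\kappa$ that the paper records as its inequality (\ref{2}) before substituting into the defining inequality. If anything, your write-up is cleaner, since the paper's statement of the small-argument case contains a typo ($\phi(\kappa)\leq\phi(\kappa)$ where $\phi(\kappa)\leq\phi(M)$ is meant) that your version states correctly.
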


\begin{proof}
Since $T_{1},T_{2}:K\rightarrow E$ are two nonself total asymptotically
nonexpansive mappings with squences $\{\mu_{n}\},\{\lambda_{n}\}$, then we
have for all $x,y\in K,$%
\begin{equation}
\left\Vert \left(  PT_{1}\right)  ^{n}x-\left(  PT_{1}\right)  ^{n}%
y\right\Vert \leq\left\Vert x-y\right\Vert +\mu_{n}\phi\left(  \left\Vert
x-y\right\Vert \right)  +\lambda_{n},\label{2a}%
\end{equation}%
\begin{equation}
\left\Vert \left(  PT_{2}\right)  ^{n}x-\left(  PT_{2}\right)  ^{n}%
y\right\Vert \leq\left\Vert x-y\right\Vert +\mu_{n}\phi\left(  \left\Vert
x-y\right\Vert \right)  +\lambda_{n},n\geq1.\label{2b}%
\end{equation}
Since $\phi:{{\mathbb{R}}}^{+}\rightarrow{{\mathbb{R}}}^{+}$ is a strictly
increasing continuous function with $\phi\left(  0\right)  =0,$ we get
$\phi\left(  \kappa\right)  \leq\phi\left(  \kappa\right)  $ whenever
$\kappa\leq M$ and $\phi\left(  \kappa\right)  \leq M^{\ast}\kappa$ if
$\kappa\geq M.$ In two case, we get%
\begin{equation}
\phi\left(  \kappa\right)  \leq\phi\left(  M_{1}\right)  +M_{1}^{\ast}%
\kappa,\text{ \ }\phi\left(  \sigma\right)  \leq\phi\left(  M_{2}\right)
+M_{2}^{\ast}\sigma\label{2}%
\end{equation}
for some $M,M^{\ast}\geq0.$ The (\ref{2a}) and (\ref{2b}) with (\ref{2}) yield
that%
\begin{align*}
\left\Vert \left(  PT_{1}\right)  ^{n}x-\left(  PT_{1}\right)  ^{n}%
y\right\Vert  & \leq(1+\mu_{n}M^{\ast})\left\Vert x-y\right\Vert +\mu_{n}%
\phi\left(  M\right)  +\lambda_{n},\\
\left\Vert \left(  PT_{2}\right)  ^{n}x-\left(  PT_{2}\right)  ^{n}%
y\right\Vert  & \leq(1+\mu_{n}M^{\ast})\left\Vert x-y\right\Vert +\mu_{n}%
\phi\left(  M\right)  +\lambda_{n},\text{ }n\geq1.
\end{align*}
This completes the proof.
\end{proof}

\begin{lemma}
\label{Lemma1}Let $E$ be a real Banach space, $K$\ be a closed convex
nonempty\ subset of $E$ which as also a nonexpansive retract with retraction
$P$. Let $T_{1},T_{2}:K\rightarrow E$ be two nonself total asymptotically
nonexpansive mappings with squences $\{\mu_{n}\},\{\lambda_{n}\}$ such that $%
{\textstyle\sum\nolimits_{n=1}^{\infty}}
\mu_{n}<\infty,%
{\textstyle\sum\nolimits_{n=1}^{\infty}}
\lambda_{n}<\infty$. Suppose that there exist $M,M^{\ast}>0,~$such that
$\phi(\kappa)\leq M^{\ast}\kappa$ for all $\kappa\geq M$. Suppose that
$\{\alpha_{n}\}$ and $\left\{  \beta_{n}\right\}  $\ are two real sequences in
$[0,1],$ $F\neq\emptyset$ and $\{x_{n}\}$ is defined by $(\ref{B})$. Then the
limits $\lim_{n\rightarrow\infty}\left\Vert x_{n}-p\right\Vert $ and
$\lim_{n\rightarrow\infty}d\left(  x_{n},F\right)  $ exists, where $d\left(
x_{n},F\right)  =\inf_{p\in F}\left\Vert x_{n}-p\right\Vert .$
\end{lemma}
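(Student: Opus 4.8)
The plan is to show that the sequence $\{x_n\}$ is \emph{almost nonexpansive} toward each point of $F$, producing a recursion of the form covered by Lemma \ref{exist}, and then to transfer that recursion to the distance function $d(\cdot,F)$. The essential preliminary observation is that every common fixed point is fixed by the iterates $(PT_i)^n$: if $p\in F$ then $p\in K$, so $Pp=p$, and since $T_1p=T_2p=p$ we get $PT_ip=p$ for $i=1,2$; iterating gives $(PT_1)^np=(PT_2)^np=p$ for all $n$. Writing $\xi_n:=\mu_nM^{\ast}$ and $\zeta_n:=\mu_n\phi(M)+\lambda_n$, the inequalities \eqref{1a} and \eqref{1b} evaluated at $y=p$ read $\|(PT_i)^nx-p\|\le(1+\xi_n)\|x-p\|+\zeta_n$ for $i=1,2$ and all $x\in K$.

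First I would estimate $\|y_n-p\|$. Applying convexity of the norm to the definition of $y_n$ together with the displayed bound for $PT_1$ gives $\|y_n-p\|\le(1-\beta_n)\|x_n-p\|+\beta_n\bigl[(1+\xi_n)\|x_n-p\|+\zeta_n\bigr]=(1+\beta_n\xi_n)\|x_n-p\|+\beta_n\zeta_n$, which, using $\beta_n\le1$, is at most $(1+\xi_n)\|x_n-p\|+\zeta_n$. Next I would substitute this into the expression for $x_{n+1}$: the triangle inequality and the bounds for $PT_1,PT_2$ yield $\|x_{n+1}-p\|\le(1-\alpha_n)\|(PT_1)^ny_n-p\|+\alpha_n\|(PT_2)^ny_n-p\|\le(1+\xi_n)\|y_n-p\|+\zeta_n$, and inserting the bound for $\|y_n-p\|$ gives $\|x_{n+1}-p\|\le(1+\xi_n)^2\|x_n-p\|+(2+\xi_n)\zeta_n$. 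Setting $b_n:=2\xi_n+\xi_n^2$ and $c_n:=(2+\xi_n)\zeta_n$, this is precisely $\|x_{n+1}-p\|\le(1+b_n)\|x_n-p\|+c_n$.

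It then remains to check summability and invoke the lemma. Since $\sum\mu_n<\infty$ and $\sum\lambda_n<\infty$, we have $\sum\xi_n<\infty$ and $\sum\zeta_n<\infty$; because $\xi_n\to0$ both $\{\xi_n\}$ and $\{\zeta_n\}$ are bounded, whence $\sum b_n<\infty$ and $\sum c_n<\infty$. Lemma \ref{exist} then guarantees that $\lim_{n\to\infty}\|x_n-p\|$ exists. Finally, since the recursion $\|x_{n+1}-p\|\le(1+b_n)\|x_n-p\|+c_n$ holds for \emph{every} $p\in F$, taking the infimum over $p\in F$ on both sides gives $d(x_{n+1},F)\le(1+b_n)d(x_n,F)+c_n$, and a second application of Lemma \ref{exist} shows that $\lim_{n\to\infty}d(x_n,F)$ exists, completing the argument.

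The computations are essentially routine; the two points that deserve care are the reduction showing that the common fixed points are fixed by the iterates $(PT_i)^n$ (which relies on $P$ being a retraction onto $K$, so that $Pp=p$), and the verification that squaring the factor $1+\xi_n$ still leaves the perturbation $b_n=2\xi_n+\xi_n^2$ summable. This last point is exactly where the hypothesis $\sum\mu_n<\infty$ is genuinely used, rather than merely $\mu_n\to0$.
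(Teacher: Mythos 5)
Your proposal is correct and follows essentially the same route as the paper: the same two-step estimate (first $\left\Vert y_{n}-p\right\Vert$ via convexity and (\ref{1a}), then $\left\Vert x_{n+1}-p\right\Vert \leq (1+b_{n})\left\Vert x_{n}-p\right\Vert +c_{n}$ with $b_{n}=2\mu_{n}M^{\ast}+(\mu_{n}M^{\ast})^{2}$), followed by taking the infimum over $p\in F$ and applying Lemma \ref{exist} twice. Your only additions are cosmetic improvements: you make explicit the fact that $(PT_{i})^{n}p=p$ for $p\in F$ (which the paper uses tacitly), and your $c_{n}=(2+\mu_{n}M^{\ast})(\mu_{n}\phi(M)+\lambda_{n})$ is a cleaner (and typo-free) version of the paper's constant.
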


\begin{proof}
Let $p\in F.$ It follows from (\ref{B}) and (\ref{1a}) that%
\begin{align}
\left\Vert y_{n}-p\right\Vert  & =\left\Vert (1-\beta_{n})x_{n}+\beta
_{n}\left(  PT_{1}\right)  ^{n}x_{n}-p\right\Vert \nonumber\\
& \leq(1-\beta_{n})\left\Vert x_{n}-p\right\Vert +\beta_{n}\left\Vert \left(
PT_{1}\right)  ^{n}x_{n}-p\right\Vert \nonumber\\
& \leq(1-\beta_{n})\left\Vert x_{n}-p\right\Vert +\beta_{n}\left[  (1+\mu
_{n}M^{\ast})\left\Vert x-p\right\Vert +\mu_{n}\phi\left(  M\right)
+\lambda_{n}\right] \nonumber\\
& \leq\left\Vert x_{n}-p\right\Vert +\beta_{n}\mu_{n}M^{\ast}\left\Vert
x_{n}-p\right\Vert +\mu_{n}\phi\left(  M\right)  +{\lambda}_{n}\nonumber\\
& \leq\left(  1+\mu_{n}M^{\ast}\right)  \left\Vert x_{n}-p\right\Vert
+\phi\left(  M\right)  \mu_{n}+{\lambda}_{n}\label{3}%
\end{align}
Similarly, from (\ref{B}), (\ref{1a}) and (\ref{1b}) we have%
\begin{align}
\left\Vert x_{n+1}-p\right\Vert  & =\left\Vert (1-\alpha_{n})\left(
PT_{1}\right)  ^{n}y_{n}+\alpha_{n}\left(  PT_{2}\right)  ^{n}y_{n}%
-p\right\Vert \nonumber\\
& \leq(1-\alpha_{n})\left\Vert \left(  PT_{1}\right)  ^{n}y_{n}-p\right\Vert
+\alpha_{n}\left\Vert \left(  PT_{2}\right)  ^{n}y_{n}-p\right\Vert
\nonumber\\
& \leq(1-\alpha_{n})\left[  (1+\mu_{n}M^{\ast})\left\Vert y_{n}-p\right\Vert
+\mu_{n}\phi\left(  M\right)  +\lambda_{n}\right] \nonumber\\
& +\alpha_{n}[(1+\mu_{n}M^{\ast})\left\Vert y_{n}-p\right\Vert +\mu_{n}%
\phi\left(  M\right)  +\lambda_{n}]\nonumber\\
& \leq(1+\mu_{n}M^{\ast})\left\Vert y_{n}-p\right\Vert +\mu_{n}\phi\left(
M\right)  +\lambda_{n}\nonumber\\
& \leq(1+\mu_{n}M^{\ast})\left[  (1+\mu_{n}M^{\ast})\left\Vert x_{n}%
-p\right\Vert +\mu_{n}\phi\left(  M\right)  +\lambda_{n}\right] \nonumber\\
& \leq\left(  1+2\mu_{n}M^{\ast}+\left(  \mu_{n}M^{\ast}\right)  ^{2}\right)
\left\Vert x_{n}-p\right\Vert \nonumber\\
& +\phi\left(  M\right)  M^{\ast}\mu_{n}^{2}+M^{\ast}\lambda_{n}\mu_{n}%
M^{\ast}+\mu_{n}\phi\left(  M\right)  +\lambda_{n}\nonumber\\
& \leq\left(  1+b_{n}\right)  \left\Vert x_{n}-p\right\Vert +c_{n}\label{4}%
\end{align}
where $b_{n}=2\mu_{n}M_{1}^{\ast}+\left(  \mu_{n}M_{1}^{\ast}\right)  ^{2}$
and $c_{n}=\phi\left(  M_{1}\right)  M_{1}^{\ast}\mu_{n}^{2}+M_{1}^{\ast
}\lambda_{n}\mu_{n}M_{1}^{\ast}+\mu_{n}\phi\left(  M_{1}\right)  +\lambda_{n}%
$. Since (\ref{4}) is true for each $p$ in $F.$ This implies that
\begin{equation}
d(x_{n+1},p)\leq\left(  1+b_{n}\right)  d(x_{n},p)+c_{n}.\label{5}%
\end{equation}
Since $%
{\textstyle\sum\nolimits_{n=1}^{\infty}}
b_{n}<\infty$ and $%
{\textstyle\sum\nolimits_{n=1}^{\infty}}
c_{n}<\infty,$ from Lemma \ref{exist}, we obtain that $\lim
\limits_{n\rightarrow\infty}\left\Vert x_{n}-p\right\Vert $ and $\lim
_{n\rightarrow\infty}d\left(  x_{n},F\right)  $ exists. This completes the proof.
\end{proof}

Now we have enough knowledge to formulate and prove a criterion on strong
convergence of $\{x_{n}\}$ given by (\ref{B}).

\begin{theorem}
\label{Teorem}Let $E$ be a real Banach space, $K$\ be a nonempty closed convex
subset of $E$ which as also a nonexpansive retract with retraction $P$. Let
$T_{1},T_{2}:K\rightarrow E$ be two continuous nonself total asymptotically
nonexpansive mappings with squences $\{\mu_{n}\},\{\lambda_{n}\}$ such that $%
{\textstyle\sum\nolimits_{n=1}^{\infty}}
\mu_{n}<\infty,%
{\textstyle\sum\nolimits_{n=1}^{\infty}}
\lambda_{n}<\infty$. Suppose that there exist $M,M^{\ast}>0$ such that
$\phi(\kappa)\leq M^{\ast}\kappa$ for all $\kappa\geq M$. Suppose that
$\{\alpha_{n}\}$ and $\left\{  \beta_{n}\right\}  $\ are two real sequences in
$[0,1],$ $F\neq\emptyset$ and $\{x_{n}\}$ is defined by $(\ref{B})$. Then the
sequence $\left\{  x_{n}\right\}  $ strongly converges to a common fixed point
of $T_{1}$ and $T_{2}$ if and only if $\lim\inf_{n\rightarrow\infty}d\left(
x_{n},F\right)  =0.$
\end{theorem}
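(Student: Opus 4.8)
The forward implication is immediate: if $x_n \to q$ strongly with $q \in F$, then $d(x_n,F)\le \|x_n-q\|\to 0$, so in particular $\liminf_{n\to\infty} d(x_n,F)=0$. All the substance is in the converse, and the plan is a standard Cauchy-sequence argument built on the one-step estimate (\ref{4}). Assume $\liminf_{n\to\infty} d(x_n,F)=0$. Lemma \ref{Lemma1} guarantees that $\lim_{n\to\infty} d(x_n,F)$ exists; combining this with the vanishing of the $\liminf$ forces
\[
\lim_{n\to\infty} d(x_n,F)=0.
\]

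The heart of the proof is upgrading (\ref{4}) to a uniform multi-step estimate. For every $p\in F$ we have $\|x_{n+1}-p\|\le (1+b_n)\|x_n-p\|+c_n$, where $\sum_{n=1}^\infty b_n<\infty$ and $\sum_{n=1}^\infty c_n<\infty$. Since $1+b_n\le e^{b_n}$, set $W=\exp\!\big(\sum_{n=1}^\infty b_n\big)<\infty$. Iterating (\ref{4}) from $n$ to $m$ with $m>n$ and telescoping the product of the factors $(1+b_j)$ against $W$ gives
\[
\|x_m-p\|\le W\|x_n-p\|+W\sum_{j=n}^{\infty} c_j,\qquad \text{for all }p\in F.
\]
To conclude that $\{x_n\}$ is Cauchy, fix $\varepsilon>0$ and choose $n_0$ so large that $d(x_n,F)<\varepsilon$ and $\sum_{j\ge n_0} c_j<\varepsilon$ for all $n\ge n_0$. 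For $m>n\ge n_0$, select $p\in F$ with $\|x_n-p\|<d(x_n,F)+\varepsilon<2\varepsilon$; then the triangle inequality together with the multi-step estimate yields
\[
\|x_m-x_n\|\le \|x_m-p\|+\|x_n-p\|\le (W+1)\|x_n-p\|+W\sum_{j\ge n} c_j\le (3W+2)\varepsilon,
\]
which is arbitrarily small. Hence $\{x_n\}$ is Cauchy in the Banach space $E$, and since $K$ is closed there is $q\in K$ with $x_n\to q$.

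It remains to identify $q$ as a common fixed point. Because $T_1$ and $T_2$ are continuous, $F=F(T_1)\cap F(T_2)$ is closed: if $z_k\in F$ and $z_k\to z$, then $T_i z_k=z_k\to z$ while $T_i z_k\to T_i z$, so $T_i z=z$ for $i=1,2$. Since the map $x\mapsto d(x,F)$ is $1$-Lipschitz, $d(q,F)=\lim_{n\to\infty} d(x_n,F)=0$, and closedness of $F$ then gives $q\in F$. Thus $\{x_n\}$ converges strongly to a common fixed point of $T_1$ and $T_2$, completing the converse. I expect the main obstacle to be the Cauchy estimate itself, specifically iterating (\ref{4}) correctly to extract the uniform constant $W$ and then coupling the near-minimizing choice of $p$ with the facts $d(x_n,F)\to 0$ and $\sum_{j\ge n} c_j\to 0$ so that both simultaneously control $\|x_m-x_n\|$.
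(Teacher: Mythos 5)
Your proposal is correct and follows essentially the same route as the paper's proof: obtain $\lim_{n\to\infty} d(x_n,F)=0$ from Lemma \ref{Lemma1}, use $1+b_n\le e^{b_n}$ to iterate (\ref{4}) into a uniform multi-step estimate with constant $W=\exp\bigl(\sum_n b_n\bigr)$, deduce that $\{x_n\}$ is Cauchy by combining $d(x_n,F)\to 0$ with the tail $\sum_{j\ge n} c_j\to 0$, and conclude via completeness of $E$ and closedness of $F$ (from continuity of $T_1,T_2$). Your choice of a near-minimizing $p\in F$ is a slightly more careful execution of the paper's step of taking the infimum over $p\in F$ in its inequality (\ref{7}), and your $1$-Lipschitz identification of the limit replaces the paper's equivalent contradiction argument, but these are cosmetic differences.
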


\begin{proof}
The necessity of the conditions is obvious. Therefore, we give proof for the
sufficiency. Assume that $\lim\inf_{n\rightarrow\infty}d\left(  x_{n}%
,F\right)  =0.$ From\ Lemma \ref{Lemma1}, $\lim_{n\rightarrow\infty}d\left(
x_{n},F\right)  $ exists. Our hypothesis implies $\lim\inf_{n\rightarrow
\infty}d\left(  x_{n},F\right)  =0$, therefore we get $\lim_{n\rightarrow
\infty}d\left(  x_{n},F\right)  =0$.

Now we shall show that $\{x_{n}\}$ is a Cauchy sequence in $E$. As
$1+t\leq\exp\left(  t\right)  $ for all $t>0,$ from (\ref{5}), we obtain%
\begin{equation}
\left\Vert x_{n+1}-p\right\Vert \leq\exp b_{n}\left(  \left\Vert
x_{n}-p\right\Vert +c_{n}\right)  .\label{6}%
\end{equation}
Thus, for any given $m,n$, iterating (\ref{6}), we obtain%
\begin{align*}
\left\Vert x_{n+m}-p\right\Vert  & \leq\exp b_{n+m-1}\left(  \left\Vert
x_{n+m-1}-p\right\Vert +{c}_{n+m-1}\right) \\
& \vdots\\
& \leq\exp(%
{\textstyle\sum\limits_{i=n}^{n+m-1}}
b_{i})\left(  \left\Vert x_{n}-p\right\Vert +%
{\textstyle\sum\limits_{i=n}^{n+m-1}}
c_{i}\right) \\
& \leq\exp(%
{\textstyle\sum\limits_{i=n}^{\infty}}
b_{i})\left(  \left\Vert x_{n}-p\right\Vert +%
{\textstyle\sum\limits_{i=n}^{\infty}}
{c}_{i}\right)  .
\end{align*}
Therefore,%
\begin{align*}
\left\Vert x_{n+m}-x_{n}\right\Vert  & \leq\left\Vert x_{n+m}-p\right\Vert
+\left\Vert x_{n}-p\right\Vert \\
& \leq\left[  1+\left(  \exp(%
{\textstyle\sum\limits_{i=n}^{\infty}}
b_{i})\right)  \right]  \left\Vert x_{n}-p\right\Vert \\
& +\exp(%
{\textstyle\sum\limits_{i=n}^{\infty}}
b_{i})\left(
{\textstyle\sum\limits_{i=n}^{\infty}}
c_{i}\right)  .
\end{align*}
So
\begin{equation}
\left\Vert x_{n+m}-x_{n}\right\Vert \leq D\left\Vert x_{n}-p\right\Vert
+Dc_{i}\label{7}%
\end{equation}
for a real number $D>0.$ Taking infimum on $p\in F$ in (\ref{7}), we get%
\[
\left\Vert x_{n+m}-x_{n}\right\Vert \leq Dd\left(  x_{n},F\right)  +D\left(
{\textstyle\sum\limits_{i=n}^{\infty}}
c_{i}\right)
\]
For given $\epsilon>0,$ using $\lim_{n\rightarrow\infty}d\left(
x_{n},F\right)  =0$ and $%
{\textstyle\sum\limits_{i=n}^{\infty}}
c_{i}<\infty,$ there exists an integer $N_{1}>0$ such that for all $n\geq
N_{1},$ $d\left(  x_{n},F\right)  <\epsilon/2D$ and $%
{\textstyle\sum\limits_{i=n}^{\infty}}
c_{i}<\epsilon/2D.$ Consequently, from last inequality we have
\begin{align*}
\left\Vert x_{n+m}-x_{n}\right\Vert  & \leq Dd\left(  x_{n},F\right)
+D\left(
{\textstyle\sum\limits_{i=n}^{\infty}}
c_{i}\right) \\
& \leq D\frac{\epsilon}{2D}+D\frac{\epsilon}{2D}=\epsilon
\end{align*}
which means that $\left\{  x_{n}\right\}  $ is a Cauchy sequence. Since the
space $E$ is complete, thus $\lim_{n\rightarrow\infty}x_{n}$ exists. Let
$\lim_{n\rightarrow\infty}x_{n}=q.$ Since $T_{1}$ and $T_{2}~$are\ continuous
mappings, the set of common fixed points of $T_{1}$ and $T_{2}$ is closed. We
now show that $q\in F.$ Suppose that $q\notin F.$ Since $F$ is closed subset
of $E,$ we have that $d\left(  q,F\right)  >0.$ But, for all $p\in F,$ we have%
\[
\left\Vert q-p\right\Vert \leq\left\Vert q-x_{n}\right\Vert +\left\Vert
x_{n}-p\right\Vert .
\]
This inequality gives%
\[
d\left(  q,F\right)  \leq\left\Vert q-x_{n}\right\Vert +d\left(
x_{n},F\right)  ,
\]
and so we get $d\left(  q,F\right)  =0$ as $n\rightarrow\infty,$ which
contradicts the fact $d\left(  q,F\right)  >0.$ Hence, $q\in F.$ This
completes the proof of the theorem.
\end{proof}

For our next theorems, we start by proving the following lemma which will be
used in the sequel.

\begin{lemma}
\label{Lemma2}Let $E$ be a uniformly convex real Banach space, $K$\ be a
nonempty closed convex subset of $E$ which as also a nonexpansive retract with
retraction $P$. Let $T_{1},T_{2}:K\rightarrow E$ be two uniformly
$L$-Lipschitzian, nonself total asymptotically nonexpansive mappings with
squences $\{\mu_{n}\},\{\lambda_{n}\}$ such that $%
{\textstyle\sum\nolimits_{n=1}^{\infty}}
\mu_{n}<\infty,%
{\textstyle\sum\nolimits_{n=1}^{\infty}}
\lambda_{n}<\infty$. Suppose that there exist $M,M^{\ast}>0,$ such that
$\phi(\kappa)\leq M^{\ast}\kappa$. Let $\{\alpha_{n}\}$ and $\left\{
\beta_{n}\right\}  $\ be two real sequences in $[\varepsilon,1-\varepsilon],$
for some $\varepsilon\in(0,1).$ Suppose that $\{x_{n}\}$ is genareted
iteratively by $(\ref{B})$. Then
\[
\lim_{n\rightarrow\infty}\left\Vert x_{n}-T_{1}x_{n}\right\Vert =\lim
_{n\rightarrow\infty}\left\Vert x_{n}-T_{2}x_{n}\right\Vert =0.
\]

\end{lemma}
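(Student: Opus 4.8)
The plan is to reduce everything to two applications of Schu's lemma (Lemma~\ref{fark}) after first pinning down the relevant norm-limits. Fix $p\in F$; by Lemma~\ref{Lemma1} the limit $c:=\lim_{n\to\infty}\|x_n-p\|$ exists. From (\ref{3}) one reads off $\limsup_{n\to\infty}\|y_n-p\|\le c$, while the intermediate estimate $\|x_{n+1}-p\|\le(1+\mu_nM^\ast)\|y_n-p\|+\mu_n\phi(M)+\lambda_n$ from the proof of (\ref{4}), combined with $\lim_{n\to\infty}\|x_{n+1}-p\|=c$ and $\mu_n,\lambda_n\to 0$, forces $c\le\liminf_{n\to\infty}\|y_n-p\|$. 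Hence $\lim_{n\to\infty}\|y_n-p\|=c$.

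The two core relations come next. Writing $y_n-p=(1-\beta_n)(x_n-p)+\beta_n\big((PT_1)^nx_n-p\big)$ and using $\limsup_{n\to\infty}\|(PT_1)^nx_n-p\|\le c$ (from (\ref{1a})), I apply Lemma~\ref{fark} with $t_n=\beta_n\in[\varepsilon,1-\varepsilon]$ and $r=c$ to get $\lim_{n\to\infty}\|x_n-(PT_1)^nx_n\|=0$. Likewise, from $x_{n+1}-p=(1-\alpha_n)\big((PT_1)^ny_n-p\big)+\alpha_n\big((PT_2)^ny_n-p\big)$, together with $\limsup_{n\to\infty}\|(PT_i)^ny_n-p\|\le c$ and $\lim_{n\to\infty}\|x_{n+1}-p\|=c$, Lemma~\ref{fark} with $t_n=\alpha_n$ yields $\lim_{n\to\infty}\|(PT_1)^ny_n-(PT_2)^ny_n\|=0$. (The degenerate case $c=0$ is immediate, since then $x_n\to p$.)

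From these two limits the rest is triangle-inequality bookkeeping driven by uniform $L$-Lipschitzness. First $\|y_n-x_n\|=\beta_n\|(PT_1)^nx_n-x_n\|\to 0$, and then $\|x_{n+1}-x_n\|\to 0$, splitting off the $\alpha_n$-term via the second relation and bounding $\|(PT_1)^ny_n-x_n\|\le L\|y_n-x_n\|+\|(PT_1)^nx_n-x_n\|$. Using $\|(PT_1)^nx_n-PT_1x_n\|\le L\|(PT_1)^{n-1}x_n-x_n\|$ and $\|(PT_1)^{n-1}x_n-x_n\|\le L\|x_n-x_{n-1}\|+\|(PT_1)^{n-1}x_{n-1}-x_{n-1}\|+\|x_{n-1}-x_n\|\to 0$, I obtain $\lim_{n\to\infty}\|x_n-PT_1x_n\|=0$. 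A parallel computation (inserting the second relation and $\|(PT_2)^ny_n-(PT_2)^nx_n\|\le L\|y_n-x_n\|$) first gives $\lim_{n\to\infty}\|x_n-(PT_2)^nx_n\|=0$ and then $\lim_{n\to\infty}\|x_n-PT_2x_n\|=0$.

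The main obstacle is the final passage from $PT_i$ to $T_i$. The quantity the iteration naturally controls is $\|x_n-PT_ix_n\|$, and since $Px_n=x_n$ with $P$ nonexpansive one only has $\|x_n-PT_ix_n\|\le\|x_n-T_ix_n\|$, i.e.\ the inequality points the wrong way. To reach $\|x_n-T_ix_n\|\to 0$ one must control the extra term $\|PT_ix_n-T_ix_n\|$, the displacement of $T_ix_n$ under the retraction; this is exactly the place where the weakly inward/sunny structure underlying Lemma~\ref{F} (where $F(PT_i)=F(T_i)$) enters, and I expect this transition, rather than the Schu-lemma estimates, to be the delicate step.
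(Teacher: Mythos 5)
Your argument reproduces the paper's proof essentially step for step: two applications of Lemma \ref{fark} (with $t_n=\alpha_n$ to obtain $\Vert (PT_1)^n y_n-(PT_2)^n y_n\Vert\rightarrow 0$, and with $t_n=\beta_n$ to obtain $\Vert (PT_1)^n x_n-x_n\Vert\rightarrow 0$ after pinning down $\lim_{n\rightarrow\infty}\Vert y_n-p\Vert=c$), followed by the same triangle-inequality and uniform-Lipschitz bookkeeping; the only differences are cosmetic --- you derive $c\leq\liminf_{n\rightarrow\infty}\Vert y_n-p\Vert$ directly from the intermediate estimate in the proof of (\ref{4}), which lets you avoid using (\ref{13}) at that point, whereas the paper's (\ref{14}) carries the extra $\alpha_n\Vert (PT_2)^n y_n-(PT_1)^n y_n\Vert$ term, and you occasionally substitute the $L$-Lipschitz bound where the paper uses the total asymptotically nonexpansive estimate, which changes nothing. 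The obstacle you flag at the end is genuine, but it is a defect of the lemma's statement rather than of your proof: the paper's own proof also terminates at $\lim_{n\rightarrow\infty}\Vert x_n-(PT_1)x_n\Vert=\lim_{n\rightarrow\infty}\Vert x_n-(PT_2)x_n\Vert=0$ (its (\ref{32}) and (\ref{33})) and never performs the passage from $PT_i$ to $T_i$; indeed Lemma \ref{Lemma2} assumes neither smoothness of $E$, nor sunniness of $P$, nor weak inwardness of $T_i$, so Lemma \ref{F} is not even available inside it, and every subsequent theorem invokes Lemma \ref{Lemma2} only through the $PT_i$ limits. So your proof establishes exactly what the paper's proof does, and your diagnosis that the printed conclusion (with $T_i$ in place of $PT_i$) does not follow from the given hypotheses is correct.
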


\begin{proof}
For any given $p\in F,$ by Lemma \ref{Lemma1} , we know that $\lim
_{n\rightarrow\infty}\left\Vert x_{n}-p\right\Vert $ exists. Suppose
$\lim_{n\rightarrow\infty}\left\Vert x_{n}-p\right\Vert =c,$ for some
$c\geq0.$ If $c=0,$ there is no anything to prove. Assume $c>0$. Taking
$\lim\sup$ on (\ref{3}), we have%
\begin{equation}
\lim\sup_{n\rightarrow\infty}\left\Vert y_{n}-p\right\Vert \leq\lim
\sup_{n\rightarrow\infty}\left(  1+\mu_{n}M^{\ast}\right)  \left\Vert
x_{n}-p\right\Vert +\phi\left(  M\right)  \mu_{n}+{\lambda}_{n}=c\label{8}%
\end{equation}
Therefore $\left\Vert \left(  PT_{1}\right)  ^{n}y_{n}-p\right\Vert \leq
(1+\mu_{n}M^{\ast})\left\Vert y_{n}-p\right\Vert +\mu_{n}\phi\left(  M\right)
+\lambda_{n}$ for all $n\geq1$ implies that%
\begin{equation}
\lim\sup_{n\rightarrow\infty}\left\Vert \left(  PT_{1}\right)  ^{n}%
y_{n}-p\right\Vert \leq c.\label{9}%
\end{equation}
Similar way, we get%
\begin{equation}
\lim\sup_{n\rightarrow\infty}\left\Vert \left(  PT_{2}\right)  ^{n}%
y_{n}-p\right\Vert \leq\lim\sup_{n\rightarrow\infty}(1+\mu_{n}M^{\ast
})\left\Vert y_{n}-p\right\Vert +\mu_{n}\phi\left(  M\right)  +\lambda_{n}\leq
c\label{10}%
\end{equation}
In addition,
\begin{align}
c  & =\lim_{n\rightarrow\infty}\left\Vert x_{n+1}-p\right\Vert \nonumber\\
& =\lim_{n\rightarrow\infty}\left\Vert (1-\alpha_{n})\left(  PT_{1}\right)
^{n}y_{n}+\alpha_{n}\left(  PT_{2}\right)  ^{n}y_{n}-p\right\Vert \nonumber\\
& =\lim_{n\rightarrow\infty}\left\Vert (1-\alpha_{n})\left(  \left(
PT_{1}\right)  ^{n}y_{n}-p\right)  +\alpha_{n}\left(  \left(  PT_{2}\right)
^{n}y_{n}-p\right)  \right\Vert \label{11}%
\end{align}
which gives that
\begin{equation}
\lim_{n\rightarrow\infty}\left\Vert (1-\alpha_{n})\left(  \left(
PT_{1}\right)  ^{n}y_{n}-p\right)  +\alpha_{n}\left(  \left(  PT_{2}\right)
^{n}y_{n}-p\right)  \right\Vert =c.\label{12}%
\end{equation}
Now using (\ref{9}) with (\ref{10}) and applying Lemma \ref{fark} to
(\ref{12}) , we obtain%
\begin{equation}
\lim_{n\rightarrow\infty}\left\Vert \left(  PT_{1}\right)  ^{n}y_{n}-\left(
PT_{2}\right)  ^{n}y_{n}\right\Vert =0\label{13}%
\end{equation}
Noting that%
\begin{align}
\left\Vert x_{n+1}-p\right\Vert  & =\left\Vert (1-\alpha_{n})\left(
PT_{1}\right)  ^{n}y_{n}+\alpha_{n}\left(  PT_{2}\right)  ^{n}y_{n}%
-p\right\Vert \nonumber\\
& \leq\left\Vert \left(  PT_{1}\right)  ^{n}y_{n}-p\right\Vert +\alpha
_{n}\left\Vert \left(  PT_{2}\right)  ^{n}y_{n}-\left(  PT_{1}\right)
^{n}y_{n}\right\Vert \nonumber\\
& \leq(1+\mu_{n}M^{\ast})\left\Vert y_{n}-p\right\Vert +\mu_{n}\phi\left(
M\right)  +\lambda_{n}\nonumber\\
& +\alpha_{n}\left\Vert \left(  PT_{2}\right)  ^{n}y_{n}-\left(
PT_{1}\right)  ^{n}y_{n}\right\Vert ,\label{14}%
\end{align}
this implies that%
\begin{equation}
c\leq\lim\inf_{n\rightarrow\infty}\left\Vert y_{n}-p\right\Vert \label{15}%
\end{equation}
From (\ref{8}) and (\ref{15}), we get%
\begin{equation}
\lim_{n\rightarrow\infty}\left\Vert y_{n}-p\right\Vert =c\label{15a}%
\end{equation}
Otherwise, $\left\Vert \left(  PT_{1}\right)  ^{n}x_{n}-p\right\Vert
\leq(1+\mu_{n}M^{\ast})\left\Vert x_{n}-p\right\Vert +\mu_{n}\phi\left(
M\right)  +\lambda_{n}$ for all $n\geq1$ implies that%
\begin{equation}
\lim\sup_{n\rightarrow\infty}\left\Vert \left(  PT_{1}\right)  ^{n}%
x_{n}-p\right\Vert \leq c.\label{16}%
\end{equation}
Consequently,%
\begin{align}
c  & =\lim_{n\rightarrow\infty}\left\Vert y_{n}-p\right\Vert \nonumber\\
& =\lim_{n\rightarrow\infty}\left\Vert (1-\beta_{n})x_{n}+\beta_{n}\left(
PT_{1}\right)  ^{n}x_{n}-p\right\Vert \nonumber\\
& =\lim_{n\rightarrow\infty}\left\Vert (1-\beta_{n})\left(  x_{n}-p\right)
+\beta_{n}\left(  \left(  PT_{1}\right)  ^{n}x_{n}-p\right)  \right\Vert
\label{17}%
\end{align}
gives that%
\begin{equation}
\lim_{n\rightarrow\infty}\left\Vert (1-\beta_{n})\left(  x_{n}-p\right)
+\beta_{n}\left(  \left(  PT_{1}\right)  ^{n}x_{n}-p\right)  \right\Vert
=c.\label{18}%
\end{equation}
Again using Lemma \ref{fark}, we get%
\begin{equation}
\lim_{n\rightarrow\infty}\left\Vert \left(  PT_{1}\right)  ^{n}x_{n}%
-x_{n}\right\Vert =0\label{19}%
\end{equation}
Moreover, from (\ref{B}), we have%
\begin{align}
\left\Vert y_{n}-x_{n}\right\Vert  & =\left\Vert (1-\beta_{n})x_{n}+\beta
_{n}\left(  PT_{1}\right)  ^{n}x_{n}-x_{n}\right\Vert \nonumber\\
& =\beta_{n}\left\Vert \left(  PT_{1}\right)  ^{n}x_{n}-x_{n}\right\Vert
\label{20}%
\end{align}
Thus from (\ref{19})
\begin{equation}
\lim_{n\rightarrow\infty}\left\Vert y_{n}-x_{n}\right\Vert =0.\label{21}%
\end{equation}
Also
\begin{align}
\left\Vert \left(  PT_{2}\right)  ^{n}y_{n}-x_{n}\right\Vert  & \leq\left\Vert
\left(  PT_{2}\right)  ^{n}y_{n}-\left(  PT_{1}\right)  ^{n}y_{n}\right\Vert
+\left\Vert \left(  PT_{1}\right)  ^{n}y_{n}-\left(  PT_{1}\right)  ^{n}%
x_{n}\right\Vert \nonumber\\
& +\left\Vert \left(  PT_{1}\right)  ^{n}x_{n}-x_{n}\right\Vert \nonumber\\
& \leq\left\Vert \left(  PT_{2}\right)  ^{n}y_{n}-\left(  PT_{1}\right)
^{n}y_{n}\right\Vert +(1+\mu_{n}M^{\ast})\left\Vert x_{n}-y_{n}\right\Vert
\nonumber\\
& +\mu_{n}\phi\left(  M\right)  +\lambda_{n}+\left\Vert \left(  PT_{1}\right)
^{n}x_{n}-x_{n}\right\Vert \label{22}%
\end{align}
yields from (\ref{13}), (\ref{19}) and (\ref{21}) that%
\begin{equation}
\lim_{n\rightarrow\infty}\left\Vert \left(  PT_{2}\right)  ^{n}y_{n}%
-x_{n}\right\Vert =0\label{23}%
\end{equation}
By (\ref{21}) and (\ref{23}), we get%
\begin{align}
\left\Vert \left(  PT_{2}\right)  ^{n}x_{n}-x_{n}\right\Vert  & \leq\left\Vert
\left(  PT_{2}\right)  ^{n}x_{n}-\left(  PT_{2}\right)  ^{n}y_{n}\right\Vert
+\left\Vert \left(  PT_{2}\right)  ^{n}y_{n}-x_{n}\right\Vert \nonumber\\
& \leq(1+\mu_{n}M^{\ast})\left\Vert x_{n}-y_{n}\right\Vert +\mu_{n}\phi\left(
M\right)  +\lambda_{n}\nonumber\\
& +\left\Vert \left(  PT_{2}\right)  ^{n}y_{n}-x_{n}\right\Vert \label{24}%
\end{align}
and this implies%
\begin{equation}
\lim_{n\rightarrow\infty}\left\Vert \left(  PT_{2}\right)  ^{n}x_{n}%
-x_{n}\right\Vert =0.\label{25}%
\end{equation}
Then
\begin{align}
\left\Vert \left(  PT_{1}\right)  ^{n}y_{n}-x_{n}\right\Vert  & \leq\left\Vert
\left(  PT_{1}\right)  ^{n}y_{n}-\left(  PT_{1}\right)  ^{n}x_{n}\right\Vert
+\left\Vert \left(  PT_{1}\right)  ^{n}x_{n}-x_{n}\right\Vert \nonumber\\
& \leq(1+\mu_{n}M^{\ast})\left\Vert x_{n}-y_{n}\right\Vert +\mu_{n}\phi\left(
M\right)  +\lambda_{n}\nonumber\\
& +\left\Vert \left(  PT_{1}\right)  ^{n}x_{n}-x_{n}\right\Vert \label{26}%
\end{align}
gives%
\begin{equation}
\lim_{n\rightarrow\infty}\left\Vert \left(  PT_{1}\right)  ^{n}y_{n}%
-x_{n}\right\Vert =0.\label{27}%
\end{equation}
Using (\ref{13}) and (\ref{27}), we have%
\begin{align}
\left\Vert x_{n+1}-x_{n}\right\Vert  & =\left\Vert (1-\alpha_{n})\left(
PT_{1}\right)  ^{n}y_{n}+\alpha_{n}\left(  PT_{2}\right)  ^{n}y_{n}%
-x_{n}\right\Vert \nonumber\\
& \leq\left\Vert \left(  PT_{1}\right)  ^{n}y_{n}-x_{n}\right\Vert +\alpha
_{n}\left\Vert \left(  PT_{1}\right)  ^{n}y_{n}-\left(  PT_{2}\right)
^{n}y_{n}\right\Vert \label{28}\\
& \rightarrow0\text{ as }n\rightarrow\infty.\nonumber
\end{align}
It follows (\ref{19}) and (\ref{28}) that%
\begin{align}
\left\Vert x_{n}-\left(  PT_{1}\right)  ^{n-1}x_{n}\right\Vert  &
\leq\left\Vert x_{n}-x_{n-1}\right\Vert +\left\Vert x_{n-1}-\left(
PT_{1}\right)  ^{n-1}x_{n-1}\right\Vert \nonumber\\
& +\left\Vert \left(  PT_{1}\right)  ^{n-1}x_{n-1}-\left(  PT_{1}\right)
^{n-1}x_{n}\right\Vert \nonumber\\
& \leq\left\Vert x_{n}-x_{n-1}\right\Vert +\left\Vert x_{n-1}-\left(
PT_{1}\right)  ^{n-1}x_{n-1}\right\Vert \label{29}\\
& +(1+\mu_{n-1}M^{\ast})\left\Vert x_{n}-x_{n-1}\right\Vert +\mu_{n-1}%
\phi\left(  M\right)  +\lambda_{n-1}\nonumber\\
& \rightarrow0\text{ as }n\rightarrow\infty.\nonumber
\end{align}
Since $T_{1}$ is uniformly $L$-Lipschitzian, it follows from (\ref{29}) that%
\begin{equation}
\left\Vert \left(  PT_{1}\right)  ^{n}x_{n}-\left(  PT_{1}\right)
x_{n}\right\Vert \leq L\left\Vert \left(  PT_{1}\right)  ^{n-1}x_{n}%
-x_{n}\right\Vert \rightarrow0\text{ as }n\rightarrow\infty.\label{30}%
\end{equation}
Thus, by (\ref{19}) and (\ref{30}), we get%
\begin{align}
\left\Vert x_{n}-\left(  PT_{1}\right)  x_{n}\right\Vert  & \leq\left\Vert
x_{n}-\left(  PT_{1}\right)  ^{n}x_{n}\right\Vert +\left\Vert x_{n}-\left(
PT_{1}\right)  ^{n}x_{n}\right\Vert \nonumber\\
& +\left\Vert \left(  PT_{1}\right)  ^{n}x_{n}-\left(  PT_{1}\right)
x_{n}\right\Vert \label{31}%
\end{align}
and so%
\begin{equation}
\lim_{n\rightarrow\infty}\left\Vert x_{n}-\left(  PT_{1}\right)
x_{n}\right\Vert =0.\label{32}%
\end{equation}
In the same way, we obtain%
\begin{equation}
\lim_{n\rightarrow\infty}\left\Vert x_{n}-\left(  PT_{2}\right)
x_{n}\right\Vert =0.\label{33}%
\end{equation}
This completes the proof of lemma.
\end{proof}

\begin{theorem}
Let $K$ be a nonempty closed convex subset of a real uniformly convex and
smooth Banach space $E$ with $P$ as a sunny nonexpansive retraction. Let
$T_{1},T_{2}:K\rightarrow E$ be two weakly inward and nonself uniformly
$L$-Lipschitzian total asymptotically nonexpansive mappings with squences
$\{\mu_{n}\},\{\lambda_{n}\}$ such that $%
{\textstyle\sum\nolimits_{n=1}^{\infty}}
\mu_{n}<\infty,%
{\textstyle\sum\nolimits_{n=1}^{\infty}}
\lambda_{n}<\infty$. Suppose that there exist $M,M^{\ast}>0,$ such that
$\phi(\kappa)\leq M^{\ast}\kappa$. Let $\{\alpha_{n}\}$ and $\left\{
\beta_{n}\right\}  $\ be two real sequences in $[\varepsilon,1-\varepsilon],$
for some $\varepsilon\in(0,1).$ Suppose that $\{x_{n}\}$ is genareted
iteratively by $(\ref{B})$. If one of $\ $the mappings $T_{1}$ and $T_{2}$ is
completely continuous and $F\neq\emptyset$, then the sequence $\left\{
x_{n}\right\}  $ converges strongly to a common fixed point of $T_{1}$ and
$T_{2}.$
\end{theorem}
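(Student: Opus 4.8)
The plan is to use the asymptotic regularity established in Lemma \ref{Lemma2} together with complete continuity to pull out a subsequence converging to a common fixed point, and then to upgrade this to strong convergence of the entire sequence via Theorem \ref{Teorem}. First I would record the two ingredients that hold with no extra hypotheses. Since $F\neq\emptyset$, Lemma \ref{Lemma1} gives that $\lim_{n\to\infty}\|x_n-p\|$ exists for every $p\in F$, so $\{x_n\}$ is bounded. Lemma \ref{Lemma2} (see (\ref{32}) and (\ref{33})) supplies $\lim_{n\to\infty}\|x_n-(PT_1)x_n\|=0$ and $\lim_{n\to\infty}\|x_n-(PT_2)x_n\|=0$. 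Moreover, because the retraction $P$ is nonexpansive, hence continuous, and each $T_i$ is uniformly $L$-Lipschitzian (so that, taking $n=1$ in (\ref{d5*}), the composites $PT_1,PT_2:K\to K$ are $L$-Lipschitz), both $PT_1$ and $PT_2$ are continuous self-maps of $K$.

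Next, assume without loss of generality that $T_1$ is completely continuous. As $\{x_n\}$ is bounded, there is a subsequence $\{x_{n_j}\}$ with $T_1x_{n_j}\to z$ for some $z\in E$; applying the continuous retraction yields $(PT_1)x_{n_j}=P(T_1x_{n_j})\to Pz=:q\in K$. Combining this with $\|x_{n_j}-(PT_1)x_{n_j}\|\to0$ forces $x_{n_j}\to q$. I would then identify $q$ as a common fixed point. By continuity of $PT_1$, $(PT_1)x_{n_j}\to(PT_1)q$, and comparison with $(PT_1)x_{n_j}\to q$ gives $(PT_1)q=q$. Likewise $x_{n_j}\to q$ together with $\|x_{n_j}-(PT_2)x_{n_j}\|\to0$ gives $(PT_2)x_{n_j}\to q$, while continuity gives $(PT_2)x_{n_j}\to(PT_2)q$, so $(PT_2)q=q$. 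Thus $q\in F(PT_1)\cap F(PT_2)$. This is the point at which the remaining structural assumptions are used: $E$ is smooth, $P$ is a \emph{sunny} nonexpansive retraction, and each $T_i$ is weakly inward, so Lemma \ref{F} applies to give $F(PT_i)=F(T_i)$; hence $q\in F(T_1)\cap F(T_2)=F$.

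Finally, since $x_{n_j}\to q\in F$ we get $d(x_{n_j},F)\to0$, whence $\liminf_{n\to\infty}d(x_n,F)=0$, and Theorem \ref{Teorem} (its hypotheses being met, the mappings being continuous) forces $\{x_n\}$ to converge strongly to a point of $F$. Equivalently, one may finish by hand: $\lim_{n\to\infty}\|x_n-q\|$ exists by Lemma \ref{Lemma1} with $p=q$, and the subsequence $\{x_{n_j}\}$ tends to $q$, so the whole sequence converges to $q$. I expect the only genuinely delicate step to be the transfer of a fixed point of the \emph{self}-map $PT_i$ to a fixed point of the \emph{nonself} map $T_i$; this is precisely what the smoothness, sunniness, and weak-inwardness hypotheses secure through Lemma \ref{F}, with everything else reducing to routine bounded-sequence bookkeeping.
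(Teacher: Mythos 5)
Your proposal is correct and follows essentially the same route as the paper's own proof: complete continuity together with the asymptotic regularity from Lemma \ref{Lemma2} produces a subsequence $x_{n_j}\rightarrow q$ with $\left(PT_{1}\right)q=q=\left(PT_{2}\right)q$, Lemma \ref{F} (via smoothness, sunniness, and weak inwardness) transfers this to $q\in F$, and the existence of $\lim_{n\rightarrow\infty}\left\Vert x_{n}-q\right\Vert$ from Lemma \ref{Lemma1} upgrades subsequential to full strong convergence. If anything, you are slightly more careful than the paper at two points: you extract $T_{1}x_{n_j}\rightarrow z$ first and then apply the continuous retraction $P$, and you make explicit the final whole-sequence step, which the paper leaves implicit.
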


\begin{proof}
From \ref{Lemma1}, $\lim_{n\rightarrow\infty}\left\Vert x_{n}-p\right\Vert $
exists for any $p\in F$. We must show that $\{x_{n}\}$ has a subsequence which
converges strongly to an element of fixed point set. From Lemma \ref{Lemma2},
we have $\lim_{n\rightarrow\infty}\left\Vert x_{n}-\left(  PT_{1}\right)
x_{n}\right\Vert =\lim_{n\rightarrow\infty}\left\Vert x_{n}-\left(
PT_{2}\right)  x_{n}\right\Vert =0$. Assume that $T_{1}$ is completely
continuous. By the nonexpansiveness of $P$, there exists subsequence
$\{PT_{1}x_{n_{j}}\}$ of $\{PT_{1}x_{n}\}$ such that $PT_{1}x_{n_{j}%
}\rightarrow p$. Thus $\left\Vert x_{n_{j}}-p\right\Vert \leq\left\Vert
x_{n_{j}}-PT_{1}x_{n_{j}}\right\Vert +$ $\left\Vert PT_{1}x_{n_{j}%
}-p\right\Vert $ gives $x_{n_{j}}\rightarrow p$ $(j\rightarrow\infty)$.
Similarly $\lim_{j\rightarrow\infty}\left\Vert x_{n_{j}}-\left(
PT_{1}\right)  x_{n_{j}}\right\Vert =0$ implis by continuity of $P$ and $T_{1}
$ that $p=PT_{1}p.$ The same way $p=PT_{2}p$. By Lemma \ref{F}, $p=T_{1}%
p=T_{2}p$. Since\ $F$ is closed, so $p\in F$. This completes the proof.
\end{proof}

\begin{theorem}
Let $K$ be a nonempty closed convex subset of a real uniformly convex and
smooth Banach space $E$ with $P$ as a sunny nonexpansive retraction. Let
$T_{1},T_{2}:K\rightarrow E$ be two weakly inward and nonself uniformly
$L$-Lipschitzian total asymptotically nonexpansive mappings with squences
$\{\mu_{n}\},\{\lambda_{n}\}$ such that $%
{\textstyle\sum\nolimits_{n=1}^{\infty}}
\mu_{n}<\infty,%
{\textstyle\sum\nolimits_{n=1}^{\infty}}
\lambda_{n}<\infty$. Suppose that there exist $M,M^{\ast}>0,$ such that
$\phi(\kappa)\leq M^{\ast}\kappa$. Let $\{\alpha_{n}\}$ and $\left\{
\beta_{n}\right\}  $\ be two real sequences in $[\varepsilon,1-\varepsilon],$
for some $\varepsilon\in(0,1).$ Suppose that $\{x_{n}\}$ is genareted
iteratively by $(\ref{B})$. If $T_{1}$ and $T_{2}$ satisfy condition
$(A^{^{\prime}})$ and $F\neq\emptyset$, then the sequence $\left\{
x_{n}\right\}  $ converges strongly to a common fixed point of $T_{1}$ and
$T_{2}.$
\end{theorem}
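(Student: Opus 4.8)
The plan is to combine the two structural lemmas already proven with condition $(A^{\prime})$ so as to upgrade $\liminf_{n\to\infty} d(x_n,F)$ to an honest limit equal to $0$, and then to invoke the strong-convergence criterion of Theorem \ref{Teorem}. First I would check that every hypothesis needed by Lemma \ref{Lemma1} and Lemma \ref{Lemma2} is in force here: the space is uniformly convex, the mappings are uniformly $L$-Lipschitzian nonself total asymptotically nonexpansive with $\sum_{n=1}^{\infty}\mu_n<\infty$ and $\sum_{n=1}^{\infty}\lambda_n<\infty$, the growth bound $\phi(\kappa)\le M^{\ast}\kappa$ holds, the parameters $\{\alpha_n\},\{\beta_n\}$ lie in $[\varepsilon,1-\varepsilon]$, and $F\neq\emptyset$. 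Hence both lemmas apply. From Lemma \ref{Lemma1} the limit $\lim_{n\to\infty} d(x_n,F)$ exists, and from Lemma \ref{Lemma2} we have $\lim_{n\to\infty}\|x_n-PT_1x_n\|=\lim_{n\to\infty}\|x_n-PT_2x_n\|=0$.

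The central step is to feed these facts into condition $(A^{\prime})$. Taking $x=x_n$ in the defining inequality gives
\[
f\bigl(d(x_n,F)\bigr)\le\tfrac{1}{2}\bigl(\|x_n-PT_1x_n\|+\|x_n-PT_2x_n\|\bigr),
\]
whose right-hand side tends to $0$ by Lemma \ref{Lemma2}. Therefore $f\bigl(d(x_n,F)\bigr)\to 0$ as $n\to\infty$.

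It remains to pass from $f\bigl(d(x_n,F)\bigr)\to 0$ to $d(x_n,F)\to 0$. Write $\ell=\lim_{n\to\infty} d(x_n,F)$, which exists by the first paragraph. I would argue by contradiction: if $\ell>0$, then $d(x_n,F)>\ell/2$ for all sufficiently large $n$, and since $f$ is nondecreasing with $f(t)>0$ for $t>0$, this forces $f\bigl(d(x_n,F)\bigr)\ge f(\ell/2)>0$ for all large $n$, contradicting $f\bigl(d(x_n,F)\bigr)\to 0$. Hence $\ell=0$, i.e. $\lim_{n\to\infty} d(x_n,F)=0$, and in particular $\liminf_{n\to\infty} d(x_n,F)=0$.

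Finally, since a uniformly $L$-Lipschitzian mapping is continuous, $T_1$ and $T_2$ satisfy the continuity requirement of Theorem \ref{Teorem}; all its other hypotheses coincide with those assumed here. Invoking Theorem \ref{Teorem} with the condition $\liminf_{n\to\infty} d(x_n,F)=0$ just established yields that $\{x_n\}$ converges strongly to a common fixed point of $T_1$ and $T_2$, which completes the proof. The only delicate point in the whole argument is the monotonicity step in the third paragraph: because $f$ is assumed merely nondecreasing (not continuous or strictly increasing), one cannot simply invert $f$, and the contradiction argument through $f(\ell/2)$ is the clean way to handle it.
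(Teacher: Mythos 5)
Your proof is correct and follows essentially the same route as the paper's: Lemma \ref{Lemma1} and Lemma \ref{Lemma2} are fed into condition $(A^{\prime})$ to obtain $f\left(d\left(x_{n},F\right)\right)\rightarrow 0$, hence $\lim_{n\rightarrow\infty}d\left(x_{n},F\right)=0$, and then Theorem \ref{Teorem} is invoked. Your contradiction argument through $f(\ell/2)$ rigorously justifies the passage from $f\left(d\left(x_{n},F\right)\right)\rightarrow 0$ to $d\left(x_{n},F\right)\rightarrow 0$, a step the paper asserts with only the remark that $f$ is nondecreasing with $f(0)=0$; otherwise the two arguments coincide.
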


\begin{proof}
By Lemma \ref{Lemma1}, $\lim_{n\rightarrow\infty}d\left(  x_{n},F\right)  $
exists for all\textit{\ }$p\in F$. Also, by Lemma \ref{Lemma2},
\[
\lim_{n\rightarrow\infty}\left\Vert x_{n}-\left(  PT_{1}\right)
x_{n}\right\Vert =\lim_{n\rightarrow\infty}\left\Vert x_{n}-\left(
PT_{2}\right)  x_{n}\right\Vert =0.
\]
Using condition $(A^{\prime})$ and\ Lemma \ref{F}, we get%
\[
\lim_{n\rightarrow\infty}f\left(  d\left(  x_{n},F\right)  \right)  \leq
\lim_{n\rightarrow\infty}\left(  \frac{1}{2}\left(  \left\Vert x_{n}-\left(
PT_{1}\right)  x_{n}\right\Vert +\left\Vert x_{n}-\left(  PT_{2}\right)
x_{n}\right\Vert \right)  \right)  =0.
\]
Since $f$ is a nondecreasing function and $f(0)=0$, so we have $\lim
_{n\rightarrow\infty}d\left(  x,F\right)  =0.$ Now applying the theorem
\ref{Teorem}, we see $p\in F$.
\end{proof}

Our weak convergence theorem is as follows:

\begin{theorem}
Let $K$ be a nonempty closed convex subset of a real uniformly convex and
smooth Banach space $E$ satisfying Opial's condition with $P$ as a sunny
nonexpansive retraction. Let $T_{1},T_{2}:K\rightarrow E$ be two weakly inward
and nonself uniformly $L$-Lipschitzian total asymptotically nonexpansive
mappings with squences $\{\mu_{n}\},\{\lambda_{n}\}$ such that $%
{\textstyle\sum\nolimits_{n=1}^{\infty}}
\mu_{n}<\infty,%
{\textstyle\sum\nolimits_{n=1}^{\infty}}
\lambda_{n}<\infty$. Suppose that there exist $M,M^{\ast}>0,$ such that
$\phi(\kappa)\leq M^{\ast}\kappa$. Let $\{\alpha_{n}\}$ and $\left\{
\beta_{n}\right\}  $\ be two real sequences in $[\varepsilon,1-\varepsilon],$
for some $\varepsilon\in(0,1).$ Suppose that $\{x_{n}\}$ is genareted
iteratively by $(\ref{B})$. If $I-T_{2}$ and $I-T_{1}$ are demiclosed at zero
then the sequence $\left\{  x_{n}\right\}  $ converges weakly to a common
fixed point of $T_{1}$ and $T_{2}.$
\end{theorem}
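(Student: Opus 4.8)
The plan is to run the standard weak-convergence scheme for asymptotically nonexpansive type iterations, the only genuinely new ingredient being Opial's condition. First I would collect the two facts already established. By Lemma \ref{Lemma1}, $\lim_{n\to\infty}\|x_n-p\|$ exists for every $p\in F$; in particular $\{x_n\}$ is bounded. By Lemma \ref{Lemma2}, $\lim_{n\to\infty}\|x_n-T_1x_n\|=\lim_{n\to\infty}\|x_n-T_2x_n\|=0$, equivalently (after the nonexpansive retraction) $\|x_n-(PT_i)x_n\|\to0$ for $i=1,2$.

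Next I would extract a fixed point. Since $E$ is uniformly convex it is reflexive, so the bounded sequence $\{x_n\}$ has a subsequence $\{x_{n_j}\}$ with $x_{n_j}\rightharpoonup q$; as $K$ is closed and convex it is weakly closed, whence $q\in K$. From $x_{n_j}\rightharpoonup q$ together with $(I-T_i)x_{n_j}\to0$ and the demiclosedness of $I-T_1$ and $I-T_2$ at zero, I conclude $T_1q=q$ and $T_2q=q$, so $q\in F$. (Working instead with the self-maps $PT_i$, one gets $(PT_i)q=q$ from $\|x_n-(PT_i)x_n\|\to0$ by demiclosedness of $I-PT_i$, and Lemma \ref{F}---available because $E$ is smooth, $P$ is a sunny nonexpansive retraction and each $T_i$ is weakly inward---then upgrades this to $T_iq=q$; this is where the smoothness and weak-inwardness hypotheses are spent.)

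The crux is to show that $\{x_n\}$ has a unique weak subsequential limit. Suppose $x_{n_j}\rightharpoonup q_1$ and $x_{m_k}\rightharpoonup q_2$ with $q_1,q_2\in F$ by the previous step, and assume $q_1\neq q_2$. Because $q_1,q_2\in F$, both $\lim_n\|x_n-q_1\|$ and $\lim_n\|x_n-q_2\|$ exist by Lemma \ref{Lemma1}, so the $\limsup$ along any subsequence equals the full limit. Applying Opial's condition to $\{x_{n_j}\}$ and then to $\{x_{m_k}\}$ gives
\[
\lim_{n\to\infty}\|x_n-q_1\|=\limsup_{j\to\infty}\|x_{n_j}-q_1\|<\limsup_{j\to\infty}\|x_{n_j}-q_2\|=\lim_{n\to\infty}\|x_n-q_2\|
\]
and, symmetrically, $\lim_{n\to\infty}\|x_n-q_2\|<\lim_{n\to\infty}\|x_n-q_1\|$, a contradiction. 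Hence $q_1=q_2$.

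Finally, since $\{x_n\}$ is bounded in a reflexive space and every weakly convergent subsequence converges to the same point $q\in F$, the whole sequence converges weakly to $q$, a common fixed point of $T_1$ and $T_2$. I expect the main obstacle to be precisely this uniqueness step: one must justify replacing subsequential $\limsup$'s by full limits, which is exactly where the existence of $\lim_n\|x_n-p\|$ from Lemma \ref{Lemma1} meets Opial's inequality; the remaining arguments are routine.
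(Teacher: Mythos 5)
Your proof is correct and follows essentially the same route as the paper's: boundedness and existence of $\lim_{n\rightarrow\infty}\left\Vert x_{n}-p\right\Vert$ from Lemma \ref{Lemma1}, weak subsequential limits via reflexivity of the uniformly convex space, demiclosedness together with Lemma \ref{F} (smoothness, sunny retraction, weak inwardness) to place those limits in $F$, and Opial's condition to rule out two distinct weak subsequential limits. You are in fact slightly more careful than the paper at the two points it glosses over, namely justifying the replacement of subsequential $\limsup$'s by full limits via Lemma \ref{Lemma1}, and the final passage from uniqueness of the weak subsequential limit to weak convergence of the whole sequence.
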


\begin{proof}
Let $p\in F$ . Then by Lemma \ref{Lemma1}, $\lim_{n\rightarrow\infty
}\left\Vert x_{n}-p\right\Vert $ exists and $\left\{  x_{n}\right\}  $ is
bounded. It is point out that $PT_{1}$ and $PT_{2}$ are self-mappings defined
on $K$. We prove that $\left\{  x_{n}\right\}  $ converges subsequentially in
$F$. Uniformly convexity of Banach space $E$ implies that there exist two
weakly convergent subsequences $\left\{  x_{n_{i}}\right\}  $ and $\left\{
x_{n_{j}}\right\}  $ of baunded sequence $\left\{  x_{n}\right\}  $. Suppose
$w_{1}\in K$ and $w_{2}\in K$ are weak limits of the $\left\{  x_{n_{i}%
}\right\}  $ and $\left\{  x_{n_{j}}\right\}  $, respectively. Using Lemma
\ref{Lemma2}, we get $\lim_{n\rightarrow\infty}\left\Vert x_{n_{i}}-\left(
PT_{1}\right)  x_{n_{i}}\right\Vert =\lim_{n\rightarrow\infty}\left\Vert
x_{n_{i}}-\left(  PT_{2}\right)  x_{n_{i}}\right\Vert =0.$ Since $T_{1}$ is
demiclosed with respect to zero then we get $PT_{1}w_{1}=w_{1}$. Similarly,
$PT_{2}w_{1}=w_{1}$. That is, $w_{1}\in F$. In the same way, we have that
$w_{2}\in F$. Lemma \ref{F} guarantee that $p_{1,}~p_{2}\in F$.

Next, we give uniqueness. For this aim, assume that $w_{1}\neq w_{2}$. Using
Opial's condition, we have%
\begin{align*}
\lim_{n\rightarrow\infty}\left\Vert x_{n}-w_{1}\right\Vert  & =\lim
_{i\rightarrow\infty}\left\Vert x_{n_{i}}-w_{1}\right\Vert \\
& <\lim_{i\rightarrow\infty}\left\Vert x_{n_{i}}-w_{2}\right\Vert \\
& =\lim_{n\rightarrow\infty}\left\Vert x_{n}-w_{2}\right\Vert \\
& =\lim_{j\rightarrow\infty}\left\Vert x_{n_{j}}-w_{2}\right\Vert \\
& <\lim_{j\rightarrow\infty}\left\Vert x_{n_{j}}-w_{1}\right\Vert \\
& =\lim_{n\rightarrow\infty}\left\Vert x_{n}-w_{1}\right\Vert ,
\end{align*}
which is a contradiction. Thus $\left\{  x_{n}\right\}  $ converges weakly to
a point of $F$.
\end{proof}

Finally, we give an example which show that our theorems are applicable.

\begin{example}
Let $%
\mathbb{R}
$ be the real line with the usual norm $\left\vert {}\right\vert $ and let
$K=[-1,1]$. Define two mappings $T_{1},T_{2}:K\rightarrow K$ by
\[
T_{1}(x)=\left\{
\begin{array}
[c]{c}%
-2sin\frac{x}{2},\text{ }if~x\in\lbrack0,1]\\
2sin\frac{x}{2},\text{ }if\text{ }x\in\lbrack-1,0)
\end{array}
\right.  ~\text{\ and }T_{2}(x)=\left\{
\begin{array}
[c]{c}%
x,\text{ }if~x\in\lbrack0,1]\\
-x,\text{ }if\text{ }x\in\lbrack-1,0)
\end{array}
\right.  .
\]
In \cite{Guo}, the authors show that above mappings are asymptotically
nonexpansive mappings with common fixed point set $F=\left\{  0\right\}  $.
Since $T_{1}$~and $T_{1}$ are asymptotically nonexpansive mappings, then they
are uniformly L-Lipschitzian and total asymptotically nonexpansive mappings
with $F\neq\emptyset$. Consequently, our theorems are applicable.
\end{example}

\section{Conclusions}

1. We introduced a new concept of nonself total asymptotically nonexpansive
mapping which generalizes definition of some nonlinear mappings in existing literature.

2. Since the class of total asymptotically nonexpansive mappings includes
asymptotically nonexpansive mappings our results generalize results of Tukmen
et al. \cite{esref} and Akbulut et al. \cite{Akbulut}.

3. Our results also improve and extend the corresponding ones announced by
Ya.I. Alber et al.\cite{alber}, Chidume and Ofoedu \cite{chidume1, chidume2}
to a case of one mapping.

4. Our results generalize and extend results of Mukhamedov and Saburov \cite{FM1, FM2, FM3, FM4, FM5} and Khan et al. \cite{KKP} in view of
more general class of mappings.

\end{document}